\theoremstyle{plain}
\newtheorem{thm}{Theorem}[section]
\newtheorem{lem}[thm]{Lemma}
\newtheorem{proposition}[thm]{Proposition}
\newtheorem{defi}[thm]{Definition}
\newtheorem{cor}[thm]{Corollary}
\newtheorem{conj}[thm]{Conjecture}
\newtheorem{quest}[thm]{Question}
\newtheorem{remark}[thm]{Remark}
\newtheorem{exa}[thm]{Example}
\newcommand{\Z}{\mathbb{Z}}
\newcommand{\Q}{\mathbb{Q}}
\newcommand{\R}{\mathbb{R}}
\newcommand{\F}{\mathbb{F}}
\newcommand{\CF}{\text{CF}}
\newcommand{\Hy}{\mathbb{H}}
\newcommand{\bs}{\mathbf{s}}
\newcommand{\x}{\mathbf{x}}
\newcommand{\y}{\mathbf{y}}
\newcommand{\s}{\mathfrak{s}}
\newcommand{\Spinc}{\text{Spin}^c}
\DeclareMathOperator{\HFL}{HFL}
\DeclareMathOperator{\HF}{HF}
\DeclareMathOperator{\CFL}{CFL}
\DeclareMathOperator{\CFK}{CFK}
\newcommand{\spin}{\text{spin}}
\begin{document}
\title[]{Is the geography of Heegaard Floer homology restricted or the L-space conjecture false?}
\author{ Antonio Alfieri and Fraser Binns }
\address{Centre de recherche Math\'ematique. Montreal, Canada.}
\email{alfieriantonio90@gmail.com}
\address{Princeton University. Princeton, New Jersey, USA.}
\email{fb1673@princeton.edu}
\thanks{FB was supported by the Simons Grant {\em New structures in low-dimensional topology}.}

\maketitle

\begin{abstract}
In a recent note F. Lin showed that if a rational homology sphere $Y$ admits a taut foliation then the Heegaard Floer module $\HF^-(Y)$ contains a copy of $\F[U]/U$ as a summand~
\cite{lin2023remark}. This implies that either the $L$-space conjecture is false or that Heegaard Floer homology satisfies a geography restriction. We verify that Lin's geography restriction holds for a wide class of rational homology spheres. Indeed, we show that the Heegaard Floer module $\HF^-(Y)$ may satisfy a stronger geography restriction.

\end{abstract}

\thispagestyle{empty}

\section{Introduction}
Heegaard Floer homology is a package of invariants introduced by Ozsv\'ath and Szab\'o in 2001~\cite{ozsvath2004holomorphic}. It assigns to each three-manifold $Y$ an $\F[U]$-module, $\HF^-(Y)$, that can be used to study -- for example -- surgery problems, contact structures on three-manifolds, and the symplectic topology of four-manifolds; see~\cite{lisca2004ozsvath,lisca2007ozsvath,lisca2007ozsvath3,ozsvath2004holomorphictrinaglesymplectic,ni2010cosmetic}.
 
While a quarter of a century has passed since it was defined, there are still many mysterious open questions regarding Heegaard Floer homology. One of these asks if the structure of the module $\HF^-(Y)$ is related to the existence of taut foliations on $Y$.

Recall that a foliation of a three-manifold $Y$ is a decomposition of $Y$ into a disjoint union of $2$-dimensional sub-manifolds (leaves) that locally looks like the decomposition of $\R^3$ into horizontal planes. Every three-manifold $Y$ admits a foliation. However, it is not necessarily the case that every manifold admits a foliation if additional constrains are required -- for example if the foliation is asked to be ``Reebless" or ``taut". In the other direction, if a manifold admits such a foliation, this tells us something about the topology of $Y$ \cite{novikov1965topology,palmeira1978open}.

 A foliation is \emph{taut} if there exists a simple closed curve that intersects every leaf. In the 1980's Gabai \cite{gabai1983foliations,gabai1987foliations,gabai1987foliations3} showed that every three-manifold with $b_1>0$ admits a taut foliation. At the time of writing it is unknown which \emph{rational homology spheres} (connected, oriented, three manifolds with $b_1=0$) admit taut foliations. Understanding this problem is one of the greatest open challenges of three-manifold topology.

 There is a conjectural relationship between rational homology spheres that admit taut foliations and those whose Heegaard Floer homology is free as follows:

\begin{conj}[Boyer, Gordon and Watson~\cite{boyer2013spaces}, Juh\'asz~\cite{juhasz2015survey}: the $L$-space conjecture] For an irreducible rational homology sphere $Y$ the following statements are equivalent: 
\begin{enumerate}
\item $Y$ is an $L$-space -- i.e. $\HF^-(Y)$ is free.
\item $Y$ does not support a taut foliation,
\item $\pi_1(Y)$ has no left-ordering; that is an ordering $<$ such that: $a<b$ iff $ga<gb$.  
\end{enumerate}
\end{conj}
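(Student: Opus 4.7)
The plan treats the conjecture as three pairwise equivalences for an irreducible rational homology sphere $Y$; I note at the outset that the statement remains open at the time of writing, so what follows is a strategic outline rather than a routine verification, with emphasis on which implications are in reach and which constitute the genuine obstruction.

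First I would dispose of $(2) \Longrightarrow (1)$, that a taut foliation prevents $\HF^-(Y)$ from being free. The established route, due to Ozsv\'ath-Szab\'o, exploits Eliashberg-Thurston's perturbation of a taut foliation into a tight contact structure with non-vanishing contact invariant in $\HF^-(Y)$, which is incompatible with the rank equality characterising $L$-spaces. F.~Lin's recent refinement, cited in the introduction, sharpens this to guarantee a summand isomorphic to $\F[U]/U$. Modulo the smoothness issues handled by Bowden and by Kazez-Roberts, this direction should be regarded as settled.

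Next I would tackle the pair $(2) \Longleftrightarrow (3)$. In one direction I would try to promote a left-ordering of $\pi_1(Y)$ into a faithful action on $\R$, or on an order tree, and then translate the resulting dynamics into an essential lamination carrying a taut foliation, in the spirit of Calegari-Dunfield, Boyer-Rolfsen-Wiest, and Kazez-Roberts; the converse direction seeks a left-order extracted from the $\R$-covered or slitherable structure of a candidate foliation. Both directions succeed for graph manifolds by the work of Hanselman-Rasmussen-Rasmussen-Watson and Boyer-Clay, but are wide open in general.

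The main obstacle, which I expect to absorb essentially all of the difficulty, is the remaining equivalence $(1) \Longleftrightarrow (3)$. There is presently no intrinsic mechanism connecting the algebraic structure of $\HF^-(Y)$ to the orderability of $\pi_1(Y)$: every verified case factors through the geometric condition $(2)$. A complete proof would therefore need either to construct such a bridge -- for instance by interpreting generators of $\HF^-(Y)$ in terms of $\pi_1(Y)$-actions on one-dimensional objects -- or, as the geography results announced in the present paper tantalisingly suggest may be possible, to exhibit an irreducible rational homology sphere satisfying one of the three conditions but not the others, thereby refuting the conjecture outright.
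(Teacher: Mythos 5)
The statement you were asked to address is labeled a \emph{conjecture} in the paper, and indeed the L-space conjecture is a major open problem; the paper does not prove it, and you correctly decline to claim a proof, instead surveying the state of the art. Your outline is broadly sound in identifying which implications are settled and which are open, but it contains a substantive mislabeling that should be corrected. You write that you will ``dispose of $(2) \Longrightarrow (1)$, that a taut foliation prevents $\HF^-(Y)$ from being free.'' These are not the same implication. With the paper's numbering, $(2)$ is ``$Y$ does not support a taut foliation'' and $(1)$ is ``$Y$ is an $L$-space.'' The known result of Ozsv\'ath--Szab\'o (sharpened by Lin) is that a taut foliation forces $U$-torsion in $\HF^-(Y)$, i.e.\ $\neg(2) \Rightarrow \neg(1)$, which is the contrapositive of $(1) \Rightarrow (2)$. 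The implication you labeled, $(2) \Rightarrow (1)$ (``if $Y$ admits no taut foliation, then $Y$ is an $L$-space''), is genuinely open and is arguably the hardest geometric input needed for the conjecture, since it requires \emph{constructing} a taut foliation from the failure of the $L$-space condition. Since your plan hinges on correctly identifying which arrows are in reach, this confusion would propagate: what you call ``settled'' is $(1) \Rightarrow (2)$, and $(2) \Rightarrow (1)$ should be grouped with the open implications. Aside from this, your account of $(2) \Leftrightarrow (3)$ via Calegari--Dunfield, Boyer--Rolfsen--Wiest, Boyer--Clay, and Hanselman--Rasmussen--Rasmussen--Watson, and your identification of $(1) \Leftrightarrow (3)$ as lacking any direct bridge, are consistent with the literature and with the spirit in which the present paper treats the conjecture as motivation rather than as a target of proof.
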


Note that the equivalence of (1) and (3) would pin down a property of the fundamental group characterizing $L$-spaces, thus answering one of the fundamental questions in Ozsv\' ath and Szab\' o's problem list \cite[Question 11]{ozsvath2004heegaardsurvey}.

\begin{quest} Is there a topological classification of $L$-spaces?
\end{quest}

In this paper we advance the hypothesis that the $L$-space conjecture is linked to another fundamental problem in Heegaard Floer theory:

\begin{quest}[The Geography Question]
    Which $\F[U]$-modules are realized as the Heegaard Floer group $\HF^-(Y)$ of some three-manifold $Y$?
\end{quest}

It is immediate from the definition that $\HF^-(Y)$ is finitely generated for any three-manifold manifold $Y$. Indeed, Ozsv\'ath and Szab\'o showed that for a rational homology sphere $Y$, $\HF^-(Y)$ is of rank $|H_1(Y;\Z)|$ as a $U$-module~\cite[Theorem 10.3]{ozsvath2004holomorphic}. 

\subsection{Lin's theorem about taut foliations} The $L$-space conjecture holds for graph manifolds, some families of knot surgeries, and some branched coverings -- see~\cite{boyer2023recalibrating,hanselman2020spaces,boyer2017foliations,li2014taut}, for example. Nevertheless there is no consensus on whether the $L$-space holds true in full generality. The only part of the conjecture which is known to be true in general is the following: if a rational homology sphere $Y$ admits a taut foliation then the Heegaard Floer group $\HF^-(Y)$ has non-trivial torsion~\cite[Theorem 1.4]{ozsvath2004holomorphicdisksandgenusbounds}. Recently Lin proved the following refinement of this result.

\begin{thm}[Lin~\cite{lin2023remark}]\label{thm:lin}
Suppose $Y$ admits a taut foliation. Then the Heegaard Floer module $\HF^-(Y)$ contains a copy of $\F[U]/U$ as an $\F[U]$-summand.
\end{thm}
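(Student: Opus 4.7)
The plan is to combine two classical inputs with one delicate analysis. The classical inputs are Eliashberg--Thurston's perturbation theorem, which turns any taut foliation on $Y$ into a positive contact structure $\xi$ that is weakly semi-fillable by $(Y\times[-1,1],\omega)$, and the Ozsv\'ath--Szab\'o non-vanishing theorem (with Ghiggini's extension to the weak setting, possibly with twisted coefficients) asserting that the contact class $c(\xi)\in\widehat{\HF}(-Y)$ is non-zero. From the $\F[U]$-module exact triangle
\[
\widehat{\HF}(-Y)\xrightarrow{\iota}\HF^+(-Y)\xrightarrow{U}\HF^+(-Y)\xrightarrow{\delta}\widehat{\HF}(-Y),
\]
the image $c^+(\xi):=\iota(c(\xi))$ is non-zero and annihilated by $U$ (since $\mathrm{image}\,\iota=\ker U$), so it already generates a cyclic submodule isomorphic to $\F[U]/U$ inside $\HF^+(-Y)$.

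The crux of the proof is to upgrade this submodule to a \emph{direct} summand. Decomposing $\HF^+(-Y)=\mathcal{T}^+\oplus\HF^+_{\mathrm{red}}$ and $\HF^+_{\mathrm{red}}\cong\bigoplus_i\F[U]/U^{n_i}$, a torsion class in $\ker U$ generates an $\F[U]/U$ summand precisely when it lies outside $U\cdot\HF^+(-Y)$; by exactness of the triangle above, this is equivalent to $\delta\iota(c(\xi))\neq 0$ in $\widehat{\HF}(-Y)$. The main obstacle is therefore to verify this non-vanishing. The composition $\delta\iota\colon\widehat{\HF}(-Y)\to\widehat{\HF}(-Y)$ is a canonical map closely related to the base-point action: on a chain-level representative it can be computed by counting pseudo-holomorphic Whitney disks of Maslov index $1$ passing exactly once over the base-point $z$. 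To evaluate it on a representative of $c(\xi)$ I would work with a Heegaard diagram adapted to $\xi$ in the spirit of Honda--Kazez--Mati\'c, where the contact cycle admits an explicit description and the relevant disk count is amenable to analysis.

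Finally, transfer the conclusion from $\HF^+(-Y)$ to $\HF^-(Y)$ using Ozsv\'ath--Szab\'o's Poincar\'e-type duality, which yields an isomorphism of $\F[U]$-modules $\HF^-(Y)\cong\HF^+(-Y)^{\vee}$. Dualization preserves direct-sum decompositions of finitely generated $\F[U]$-modules and sends $\F[U]/U$ to itself, so an $\F[U]/U$ summand of $\HF^+(-Y)$ produces an $\F[U]/U$ summand of $\HF^-(Y)$. The expected difficulty is concentrated entirely in the chain-level verification of $\delta\iota(c(\xi))\neq 0$; everything else is formal.
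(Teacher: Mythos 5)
The paper cites Theorem~\ref{thm:lin} from Lin's note and does not reprove it, so there is no in-paper argument to compare your proposal against; I will assess it on its own terms.

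Your formal reduction is correct: writing $c(\xi)\in\widehat{\HF}(-Y)$ for the contact class of the Eliashberg--Thurston perturbation, the image $c^+(\xi):=\iota(c(\xi))\in\HF^+(-Y)$ generates a direct $\F[U]/U$ summand exactly when $\delta\iota(c(\xi))\neq 0$, and the transfer to $\HF^-(Y)$ via duality together with the identification of reduced parts is routine. (A minor slip: $c^+(\xi)\neq 0$ does not follow from $c(\xi)\neq 0$ alone, since $\ker\iota$ can be non-trivial; but this is subsumed in the stronger non-vanishing you ultimately need.)

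The genuine gap is that the decisive step $\delta\iota(c(\xi))\neq 0$ --- which you yourself flag as ``the crux'' --- is never proved. You propose to verify it by counting index-one disks crossing the basepoint once on a Honda--Kazez--Mati\'c-adapted diagram, but no such computation is carried out, and it is not clear it is tractable. This step is the entire content of the theorem; everything else in your write-up is standard homological algebra. As written, this is an outline of a strategy, not a proof.

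There is also a structural worry. The classical inputs you invoke --- the Ozsv\'ath--Szab\'o--Ghiggini non-vanishing of the twisted contact invariant, and the fact (via the weak filling and $F^\infty_W=0$) that $c^+(\xi)$ is not in the image of $\HF^\infty(-Y)\to\HF^+(-Y)$ --- are strictly weaker than what you need, and are a priori compatible with $\delta\iota(c(\xi))=0$. For instance, nothing you cite rules out a situation in which $\HF^+_{\mathrm{red}}(-Y)\cong\F[U]/U^2$ and $c^+(\xi)$ is the bottom element of that tower: then $c^+(\xi)$ is non-zero, killed by $U$, and outside the infinite tower, yet lies in $U\cdot\HF^+(-Y)$, and $\HF^+(-Y)$ has no $\F[U]/U$ summand at all. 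So either your disk count would need to exploit some finer feature of the Eliashberg--Thurston contact structure (a grading or divisibility constraint specific to perturbed taut foliations) that you have not identified, or the intermediate claim you reduce to --- that the contact class itself generates the summand --- is strictly stronger than Lin's statement and could fail even when the theorem holds. In either case the proposal does not establish the theorem, and you should consult Lin's note for the argument that does.
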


For a rational homology sphere  $Y$, we say that $\HF^-(Y)$ satisfies the \emph{Lin geography restriction} if $\HF^-(Y)$ has either no $U$-torsion or an $\F[U]/U\simeq \F$ summand. In what follows we use Heegaard Floer surgery formulae \cite{manolescu2010heegaard,ozsvath2010knotrationalsurgeries,Rasmussen} to prove that a broad class of rational homology spheres have Heegaard Floer homology satisfying the Lin geography restriction.  Thus we present evidence for the following conjecture.

\begin{conj}\label{con:linrestriction}The Heegaard Floer module $\HF^-(Y)$ satisfies the Lin geography restriction for all rational homology spheres $Y$.
\end{conj}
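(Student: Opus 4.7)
The plan is to combine the Lickorish--Wallace theorem with the Heegaard Floer surgery formulas of Ozsv\'ath--Szab\'o, Rasmussen, and Manolescu--Ozsv\'ath. Since every rational homology sphere $Y$ arises as integer surgery on some framed link $L \subset S^3$, the module $\HF^-(Y)$ can in principle be computed as the homology of a surgery mapping cone assembled from the link Floer complex $\CFL^-(L)$ and the framing data. The strategy is then to analyse the $\F[U]$-module structure of this cone and extract a $\F[U]/U$ summand whenever the homology has nontrivial $U$-torsion.

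First I would handle the knot case. For $p/q$-surgery on a knot $K \subset S^3$, the Ozsv\'ath--Szab\'o mapping cone formula expresses $\HF^-(S^3_{p/q}(K))$ as the homology of a two-row complex whose local behaviour in each $\Spinc$-structure is governed by the integers $V_i(K)$ and $H_i(K)$. The $U$-torsion of $\HF^-(S^3_{p/q}(K))$ is then controlled by how these integers vary with $i$. The crucial structural input is the well-known unit-step inequality $V_i(K) - V_{i+1}(K) \in \{0,1\}$ (and likewise for $H_i$), which follows from $U$-equivariance of the horizontal and vertical truncations of $\CFK^\infty(K)$. Combined with a careful bookkeeping of the mapping cone differential, this should force any nonzero torsion summand to contain a $\F[U]/U$ piece, verifying the Lin restriction in the knot surgery case. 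Analogous arguments, using Rasmussen's rational surgery formula and a direct computation at lens space quotients, should cover surgeries on knots in lens spaces.

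Next I would promote this to the link setting via the Manolescu--Ozsv\'ath hyperbox of chain complexes. Inductively, assume the restriction holds for integer surgeries on all proper sublinks of an $n$-component link $L \subset S^3$. Viewing $\HF^-$ of the surgery on $L$ as the mapping cone of a map between two such sublink surgery groups, one can try to trace how a $\F[U]/U$ summand in the sub-complexes survives to the total homology, splitting into cases according to whether the mapping cone differential acts trivially on the summand's generator or not.

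The main obstacle will be this inductive step. The higher-order diagonal maps in the Manolescu--Ozsv\'ath hyperbox can, in principle, cancel a candidate $\F[U]/U$ summand, and ruling this out requires a precise understanding of how those diagonals interact with the $U$-action. A cleaner route for specific families, such as surgeries on $L$-space links, two-bridge links, or algebraically split links, would be to invoke simplified forms of the surgery formula in those settings. Proving the full conjecture in general will likely require either a purely algebraic structural theorem about finitely generated filtered $\F[U]$-complexes arising from link surgery, or a conceptual argument that bypasses an explicit surgery description altogether.
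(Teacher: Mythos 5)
The statement you are attempting to prove is a conjecture in the paper, not a theorem: the authors present it as open and offer evidence in the form of two partial results, namely the strong geography restriction for all rational surgeries on knots in $S^3$ (Theorem~\ref{knots}) and for large surgeries on links (Theorem~\ref{largelinks}). Both of these establish something stronger than the Lin restriction --- a direct summand $\F[U]/U^\ell \oplus \cdots \oplus \F[U]/U$ for the maximal torsion order $\ell$. There is therefore no proof of the full conjecture in the paper to compare against, and you implicitly acknowledge this yourself by flagging the higher-diagonal problem as unresolved at the end of your sketch.

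For knot surgeries your outline does track the paper's argument: they work with the Ozsv\'ath--Szab\'o mapping cone, introduce $M_i=\min\{V_i,H_i\}$, and rely on exactly the unit-step inequality you cite (Lemma~\ref{lem:changingsizetower}). What your sketch is missing is the paper's main organizing device, the ``similar skylines'' lemma (Lemma~\ref{lem:heightchanges}): one compares $H_*(A^+_m)$ and $H_*(A^+_{m+1})$ via the exact triangle coming from the short exact sequence $0 \to C\{j=m,\,i<0\} \to A^+_m \to A^+_{m+1} \to 0$, and then runs an intermediate-value argument across $\Spinc$ structures toward the eventually-torsion-free end. That mechanism is what converts your vague ``careful bookkeeping of the mapping cone differential'' into the full staircase of torsion summands, not merely one $\F[U]/U$.

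For links the approaches genuinely diverge. You propose induction on sublinks through the Manolescu--Ozsv\'ath hyperbox, and you correctly worry that the higher diagonal maps could cancel a candidate torsion summand. The paper never confronts this: it restricts to \emph{large} framings, where the hyperbox formula collapses to a direct sum of $A^-_{\mathbf{s}}$ over the fundamental domain $P(\Lambda)$ and no higher diagonals survive, and then runs lattice-path arguments through $P(\Lambda)$ using the same similar-skylines lemma. Your inductive scheme is a different route, and if carried through it would go beyond the paper; but as written you give no mechanism for controlling those higher diagonals, which is precisely the obstruction that keeps the general conjecture open in the paper as well. In short: your knot-case plan matches the paper in substance, your link-case plan is a different and unrealized route, and neither you nor the paper proves the conjecture as stated.
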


It follows from Lin's theorem, Theorem~\ref{thm:lin}, that either Conjecture \ref{con:linrestriction} is true, or the $L$-space conjecture is false. Note that for a large class of graph manifolds the Lin geography restriction was first verified by Bodnar, and Plamenevskaya \cite{bodnar2021heegaard}. 

\subsection{Statement of the main results} We shall make use of the following definition. 

\begin{defi}
An $\F[U]$-module $M$ satisfies the strong geography restriction if it contains a direct summand
\[\F[U]/U^\ell \oplus \F[U]/U^{\ell-1}\oplus \dots \oplus  \F[U]/U^2 \oplus \F[U]/U\]
where $\ell=\min \{\ell\geq0 : U^\ell \cdot M_\text{red}=0\}$. 
\end{defi}

Our first theorem concerns surgeries on knots. 

\begin{thm} 
\label{knots} If $Y$ is surgery on a knot in $S^3$ then the Heegaard Floer module $\HF^-(Y)$ satisfies the strong geography restriction.
\end{thm}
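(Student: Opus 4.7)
The plan is to apply the Ozsv\'ath--Szab\'o rational surgery formula~\cite{ozsvath2010knotrationalsurgeries} (together with its refinement by Manolescu--Ozsv\'ath~\cite{manolescu2010heegaard}), which computes $\HF^-(Y)$ as the homology of a mapping cone
\[ \mathbb{X}^-_{p/q}(K) \;=\; \Cone\!\left( D\colon \bigoplus_{s\in \Z} A_s^- \longrightarrow \bigoplus_{s \in \Z} B_s^- \right) \]
built from the knot Floer complex $\CFK^-(K)$. Here each $B_s^-$ is quasi-isomorphic to $\F[U]$, each $A_s^-$ is a subquotient of $\CFK^-(K)$ with $H_*(A_s^-) \cong \F[U] \oplus T_s$ (and $T_s = 0$ for $|s| \geq g(K)$), and $D$ has vertical and horizontal components $v_s$, $h_s$ that act on the $\F[U]$-towers as multiplication by $U^{V_s(K)}$ and $U^{H_s(K)}$ respectively. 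The torsion in $\HF^-(Y)$ therefore arises from two sources: the cokernel of the tower-to-tower maps (whose $U$-heights are controlled by the $V$- and $H$-invariants), and the reduced pieces $T_s$ (which inject into $\HF^-(Y)$ because $v_s$ and $h_s$ kill all torsion in $H_*(A_s^-)$).

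The key input is the \emph{staircase property} of the $V$-invariants of $K$: the sequence $V_0(K)\geq V_1(K)\geq\cdots\geq 0$ is non-increasing with consecutive differences in $\{0,1\}$, and stabilizes at $0$ for $s\geq g(K)$. In particular, if $V_0(K)=\ell'$, then $\{V_s(K) : s\geq 0\}=\{0,1,\dots,\ell'\}$; the analogous statement holds for the $H$-invariants. For each $k \in \{1,\dots,\ell'\}$, let $s_k$ be the minimal $s$ with $V_{s_k}(K)=k$. The component $v_{s_k}$ sends the tower generator of $A_{s_k}^-$ to $U^k$ times the tower generator of $B_{s_k}^-$, producing a class $\xi_k$ of $U$-height exactly $k$ in $H_*(\mathbb{X}^-_{p/q}(K))$. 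A chain-level analysis, combined with the staircase property of the $H$-invariants to control horizontal cancellations across spin-c equivalence classes mod $p$, should show that the $\xi_k$'s generate an $\F[U]$-submodule isomorphic to $\bigoplus_{k=1}^{\ell'} \F[U]/U^k$ that splits off as a direct summand of $\HF^-(Y)$.

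The main obstacle is to justify this splitting and to account for the contribution of the $T_s$'s. Specifically one must show that (i) the classes $\xi_k$ survive as genuine direct summands despite the horizontal maps $h_s$ that mix different $B$-towers within the same spin-c equivalence class, and (ii) any additional torsion in $\HF^-(Y)$ arising from the $T_s$'s either fits into the existing staircase or extends it without creating gaps, so that the maximum $U$-height $\ell$ is realized by at least one summand of each height $1, 2, \dots, \ell$. Handling (ii) will require structural results on knot Floer complexes of knots in $S^3$ in order to bound the $U$-heights coming from the $T_s$'s and to arrange them into a staircase pattern. Modulo these technical points, the staircase property of the knot invariants forces $\HF^-(Y)$ to satisfy the strong geography restriction.
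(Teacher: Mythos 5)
Your proposal uses the same basic ingredients as the paper --- the Ozsv\'ath--Szab\'o/Manolescu--Ozsv\'ath mapping-cone formula and the staircase behaviour of the $V_s,H_s$ invariants --- but the two steps you flag as ``the main obstacle'' are exactly where the paper does its real work, and your plan for filling them is not the route the paper takes.

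On the tower part, your classes $\xi_k$ are morally correct, but the height of the torsion piece sitting over the index $i$ of the cone is controlled by $M_i=\min\{V_i,H_i\}$, not by $V_i$ alone; this happens to agree with $V_s$ for $s\geq0$ (since $V_0=H_0$ and $H$ is non-decreasing), so the staircase $\{M_i\}$ still takes every value from $0$ up to $M_0$, which is what saves your construction. A more serious point your sketch passes over: in each $\spin^c$ equivalence class exactly one index $i_s$ is absorbed into the infinite tower of $\HF^+(Y,s)$ and does not contribute torsion. The paper's Lemma~\ref{lem:towersummand} carries out an explicit arithmetic argument showing the omitted index always carries a \emph{tallest} finite-height tower, so its removal never breaks the staircase; this is a genuine computation, not something the staircase property alone hands you.

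On the reduced parts $T_s$, your plan --- ``show any additional torsion either fits into the existing staircase or extends it without creating gaps'' --- is where the proposal is weakest, and it is also the possibility you cannot ignore: the maximal $U$-height of $\HF^-(Y)$ may well come from a $T_s$ rather than from a tower, in which case your $\xi_k$ only go up to $\ell'=V_0$ and you are short. The paper sidesteps this difficulty entirely by establishing a \emph{direct sum decomposition} (Lemma~\ref{lem:decomposingsmall} and Corollary~\ref{cor:reddecomp})
\[
\HF^+_{\text{red}}(S^3_{p/q})\;\simeq\;\Bigl(\textstyle\bigoplus_{s} H_{\text{red},*}(\mathbb{A}_s^+)\Bigr)\;\oplus\;\Bigl(\textstyle\bigoplus_{s}\bigoplus_{i\neq i_s}T^+_{M_{\lfloor(s+ip)/q\rfloor}}\Bigr)
\]
and then shows each summand \emph{separately} satisfies the strong geography restriction: the tower part by the arithmetic just described, and the reduced part by a ``similar skylines'' argument (Lemma~\ref{lem:heightchanges}, applied via Lemma~\ref{lem:AnsummandtoHFsummands} and Corollary~\ref{cor:torsionsummand}) together with an intermediate-value principle, since $H_{\text{red},*}(A_t)$ and $H_{\text{red},*}(A_{t+1})$ are related by an exact triangle with an $\F^k$-vertex and the sequence eventually vanishes. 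Once each summand has every height from $1$ up to its own maximum, the direct sum automatically has every height up to the global maximum $\ell$, with no need to mesh the two staircases. I would encourage you to adopt this ``split and conquer'' structure rather than trying to interleave the reduced contributions into the tower staircase by hand; the latter is what makes (ii) feel like an obstacle in your write-up, and the paper's decomposition dissolves it.
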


 In particular if $Y$ is surgery on a knot then either $Y$ is an $L$-space, or $\HF^-(Y)$ has an $\F[U]/U$-summand, \text{i.e.} it satisfies the Lin geography restriction. This answers a question that was recently asked by Baldwin on his research blog \cite{BaldwinBlog}. Our second theorem covers large surgeries on links.

\begin{thm} \label{largelinks}  If $Y$ is large surgery on a link in $S^3$ then the Heegaard Floer module $\HF^-(Y)$ satisfies the strong geography condition.  
\end{thm}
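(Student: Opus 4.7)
The approach is to reduce to the knot setting of Theorem~\ref{knots} via Manolescu and Ozsv\'ath's large surgery formula for links~\cite{manolescu2010heegaard}. Let $L = L_1 \cup \cdots \cup L_n \subset S^3$ be the link and let $\Lambda$ be the (sufficiently large) framing. The large surgery formula provides, for each $\Spinc$ structure $\mathfrak{s}$ on $Y = S^3_\Lambda(L)$, an isomorphism of $\F[U]$-modules
\[\HF^-(Y, \mathfrak{s}) \cong H_*\bigl(\mathfrak{A}^-(L, s)\bigr),\]
where $s \in \Hy(L)$ is the associated multi-Alexander coordinate and $\mathfrak{A}^-(L, s) \subseteq \CFL^-(L)$ is the generalized ``quadrant'' subcomplex. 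This is the precise link analog of the identification $\HF^-(S^3_n(K), \mathfrak{s}_k) \cong H_*(A^-_k)$ that powers the proof of Theorem~\ref{knots}.

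The key step is then to exploit a one-parameter sub-family of the $\Spinc$ structures. Fix a coordinate direction $\hat{e}_i \in \Hy(L)$ corresponding to the $i$-th component of $L$, and consider the family $\{\mathfrak{A}^-(L, s + t \hat{e}_i)\}_{t \in \Z}$ obtained by varying only the $i$-th Alexander coordinate. Translating by $\hat{e}_i$ adds or removes a slab of $\CFL^-(L)$, exactly as translating the parameter $k$ in the knot case adds or removes a row/column of $\CFK^-(K)$. Hence the argument used in Theorem~\ref{knots} to extract $\F[U]/U^j$ direct summands for every $1 \leq j \leq \ell$ should transfer to this one-parameter slice, producing the desired summands distributed across several $\Spinc$-summands of $\HF^-(Y)$. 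The multi-Alexander grading on $\CFL^-(L)$ then ensures that these summands split off as direct summands of the total module $\HF^-(Y) = \bigoplus_{\mathfrak{s}} \HF^-(Y, \mathfrak{s})$.

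The main obstacle is to verify that the multi-dimensional structure of $\CFL^-(L)$ does not interfere with this one-dimensional argument --- specifically, that no cross-interactions between different Alexander directions create or destroy the required torsion summands. This should follow from the fact that the differentials on $\CFL^-(L)$ strictly respect the multi-filtration, so that each slice computation decouples from the orthogonal directions, but a careful accounting is required to ensure that every $\F[U]/U^j$ summand produced along the slice genuinely lifts to a direct summand of the full module $\HF^-(Y)$.
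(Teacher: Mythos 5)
Your starting point is correct: the reduction goes through the Manolescu--Ozsv\'ath large surgery formula identifying $\HF^-(Y,\mathfrak{s})$ with $H_*(A^-_\bs)$. You also correctly anticipate that the key step is a one-parameter ``walk'' through Alexander coordinates along which torsion heights change by at most one, mirroring the knot argument. However, there is a genuine gap in your choice of direction. If you vary only the single coordinate $s_i$ (translating by $\hat{e}_i$), the family $H_*(\mathfrak{A}^-(L, \bs + t\hat{e}_i))$ does \emph{not} eventually become torsion-free: once $s_i$ exceeds the $i$-th width $q_i$ of the link Floer polytope, the complex $A^-_{\bs + t\hat{e}_i}$ stabilizes (this is precisely what the paper's Lemma~\ref{lem:deputy} records), and the stable value still carries whatever torsion is forced by the remaining, fixed coordinates $s_j$, $j\neq i$. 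To reach $\HF^-(S^3)\simeq\F[U]$ you must push \emph{all} coordinates past the polytope, which a single coordinate slice cannot do. Consequently the ``intermediate value'' mechanism that you want to borrow from Theorem~\ref{knots} never terminates in a torsion-free module, and the collection of $\F[U]/U^j$ summands for $1\leq j\leq \ell$ is not produced.

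The paper resolves this by allowing \emph{diagonal} steps. It defines $\bs'$ to be adjacent to $\bs$ when $0\leq s'_i - s_i\leq 1$ for every $i$, proves (Lemma~\ref{adj}) that adjacency still gives similar skylines --- the quotient $A^-_{\bs'}/A^-_\bs$ is $U$-torsion of order one, so the exact triangle shifts heights by at most one exactly as in the knot case --- and then walks a lattice path of such adjacent points from a $\spin^c$ structure realizing the maximal torsion height $N$ to the corner $\mathbf{q}$ of the smallest hyperbox $Q$ containing the link Floer polytope, where $H_*(A^-_\mathbf{q})\simeq\F[U]$. Two further ingredients you would also need to supply: (i) Lemma~\ref{lem:deputy}, which replaces an arbitrary lattice point by one inside $Q$ without changing the homology, so that the walk can start inside $Q$; and (ii) the enlarged fundamental domain $\overline{P(\Lambda)}$, so that every lattice point on the path (except possibly the endpoint) actually represents a $\spin^c$ structure of $Y$ and contributes a genuine summand of $\HF^-(Y)$. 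Your concern about cross-interactions between Alexander directions is actually harmless --- the filtration structure of $\CFL^-$ makes $A^-_\bs\subseteq A^-_{\bs'}$ a subcomplex whenever $\bs\leq\bs'$ --- but the one-coordinate slice is the wrong family. Replace $\hat{e}_i$ with $(1,1,\dots,1)$ (or, more flexibly, use the paper's notion of adjacency) and the argument closes.
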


Here ``large surgery on $L$" means that the framing matrix of $L$ is ``larger" than the smallest hyperbox containing the link Floer homology polytope of $L$ -- see Definition~\ref{def:large} for further details. Recall that the link Floer homology polytope of $L$ contains equivalent information to the Thurston norm of $L$, under mild hypothesese~\cite[Theorem 1.1]{ozsvath2008linkFloerThurstonnorm}. In the case of knots ``large" simply means at least $2g(K)-1$.

\begin{remark}
Theorem \ref{knots} and Theorem \ref{largelinks} hold true in the more general case of null-homologous knots and links in any $L$-space. 
\end{remark}

Given the above results it is natural to ask the following question.

\begin{quest} \label{question1} Does $\HF^-(Y)$ satisfy the strong geography restriction for every rational homology sphere $Y$?
\end{quest}

Of course, given that every rational homology sphere is integral surgery on \emph{some} link, removing the hypothesis that $Y$ is \emph{large} surgery on a link in the statement of Theorem \ref{largelinks} would provide an affirmative answer to Question~\ref{question1}. Indeed, we know many examples of links -- for instance torus links, and connected sums of Hopf links -- whose small surgeries satisfy at least the Lin geography restriction.

\subsection{The realization problem} Theorem \ref{largelinks} leads naturally to the following question.

\begin{quest}\label{surgeryquest}
Which rational homology spheres arise as large surgery along links in $S^3$? 
\end{quest}

There are relatively few results in this direction. Gordon-Luecke showed that a reducible integral homology sphere can never be surgery on a knot~\cite{gordon1987only}. Examples of irreducible rational homology spheres which are not surgery on a knot in $S^3$ have been given by Hom-Lidman-Karakurt~\cite[Theorem 1.1]{hom2016surgery} , Hom-lidman~\cite[Theorem 1]{hom2018note}, and Auckley~\cite{auckly1997surgery}. Other examples have been given by Doig~\cite[Corollary 5]{doig2015finite}, Hoffman-Walsh ~\cite[Theorem 4.4]{hoffman2015big} and Oszv\'ath-Szab\'o~\cite[Section 10.2]{ozsvath2003absolutely}.

\begin{proposition}\label{cor:Fsummands}
        Suppose an integral homology sphere $Y$ is large surgery on a link. Then $\HF^-(Y)\cong \F[U]\oplus\F^{\oplus n}$ for some $n$.
\end{proposition}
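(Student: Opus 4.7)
The plan is to combine the strong geography restriction of Theorem~\ref{largelinks} with the rank-one constraint on $\HF^-$ for integer homology spheres.

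Since $Y$ is an integer homology sphere, $|H_1(Y;\Z)|=1$, so by Ozsv\'ath--Szab\'o's rank theorem $\rank_{\F[U]} \HF^-(Y)=1$. The classification of finitely generated $\F[U]$-modules then yields a splitting
\[
\HF^-(Y) \cong \F[U] \oplus M_\text{red},
\]
with $M_\text{red}$ a finitely generated $U$-torsion module. By Theorem~\ref{largelinks}, $\HF^-(Y)$ satisfies the strong geography restriction, so $M_\text{red}$ contains $\bigoplus_{i=1}^\ell \F[U]/U^i$ as a direct summand, where $\ell=\min\{k\geq 0 : U^k M_\text{red}=0\}$ is the $U$-nilpotence order of $M_\text{red}$. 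The proposition thus reduces to showing $\ell\leq 1$: in that case $U\cdot M_\text{red}=0$, so $M_\text{red}\cong \F^{\oplus n}$ with $n=\dim_\F M_\text{red}$, giving $\HF^-(Y)\cong \F[U]\oplus \F^{\oplus n}$ as required.

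The main obstacle is ruling out $\ell\geq 2$. I expect this is handled by a closer inspection of the large-surgery complex used to establish Theorem~\ref{largelinks}. When $Y=S^3_\Lambda(L)$ is an integer homology sphere, the framing matrix $\Lambda$ is unimodular, so only one $\Spinc$ structure contributes to the surgery formula, and $\HF^-(Y)$ is computed by a single truncated link Floer complex $\mathfrak{A}^-\subset \CFL^\infty(L)$ appearing in the large-surgery formula. A direct structural analysis of $\mathfrak{A}^-$ in this setting---possibly combined with a parity/grading argument exploiting $\chi(\widehat{\HF}(Y))=1$ for integer homology spheres---should force every $U$-torsion class to satisfy $U\cdot x=0$, yielding $\ell\leq 1$ and concluding the argument. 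Making one of these two reductions precise is where the real work lies.
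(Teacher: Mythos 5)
Your reduction is correct as far as it goes: by the rank theorem $\HF^-(Y)\cong\F[U]\oplus M_\text{red}$, and by Theorem~\ref{largelinks} the module satisfies the strong geography restriction, so it suffices to show $\ell\leq1$, i.e., $U\cdot M_\text{red}=0$. But this last step is the entire content of the proposition, and you leave it unproven. The phrases ``a direct structural analysis of $\mathfrak{A}^-$'' and ``a parity/grading argument'' are placeholders rather than an argument, and the specific suggestion of exploiting $\chi(\widehat{\HF}(Y))=1$ cannot work: that is an Euler-characteristic constraint across Maslov gradings, which is blind to whether a torsion summand has the form $\F[U]/U$ or $\F[U]/U^2$.

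The paper does not route through Theorem~\ref{largelinks} at all; it instead exploits a geometric consequence of the hypotheses that your proposal misses. Because $Y$ is an integer homology sphere, $\Lambda$ is unimodular, so the fundamental domain $P(\Lambda)$ contains the single lattice point $\mathbf{0}$ and hence $\overline{P(\Lambda)}=[-1,1]^\ell$. The \emph{largeness} hypothesis (Definition~\ref{def:large}) then forces $Q\subseteq\overline{P(\Lambda)}$, so the link Floer polytope $\mathcal{L}$ lies in $[-1,1]^\ell$ (Lemma~\ref{prop:polytope}). Consequently $H_*(A^-_{(1,\dots,1)})\cong T^-=\F[U]$ is free, and since $\mathbf{0}$ is adjacent to $(1,\dots,1)$, Lemma~\ref{adj} shows that $\HF^-(Y)\cong H_*(A^-_{\mathbf{0}})$ has a similar skyline to $T^-$. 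By the definition of similar skylines this is exactly $\F[U]\oplus\F^{\oplus n}$. The missing ingredient in your proposal, in short, is the observation that a large surgery yielding an integer homology sphere forces the link Floer polytope into a unit hyperbox, which is what makes the adjacency argument land directly on a free module.
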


For the Brieskorn homology spheres $\Sigma(2,2n+1,4n+3)$ Rustamov~\cite{rustamov2003calculation} showed that $\HF^-(\Sigma(2,2n+1,4n+3))$ has an $\F[U]/U^2$ summand for $n\geq 3$.

\begin{cor}
  For $n\geq 3$, $\Sigma(2,2n+1,4n+3)$ is not large surgery on a link in $S^3$. 
\end{cor}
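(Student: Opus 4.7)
The plan is to derive the corollary as a direct consequence of Proposition~\ref{cor:Fsummands} combined with Rustamov's computation. The strategy is to assume $\Sigma(2,2n+1,4n+3)$ is large surgery on a link in $S^3$ and derive a contradiction with the known structure of its Heegaard Floer homology.

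First I would verify that $\Sigma(2,2n+1,4n+3)$ is indeed an integral homology sphere, which reduces to checking that the parameters are pairwise coprime. The pairs $(2,2n+1)$ and $(2,4n+3)$ are obviously coprime since the second entries are odd, while $\gcd(2n+1,4n+3)=\gcd(2n+1,(4n+3)-2(2n+1))=\gcd(2n+1,1)=1$. Hence Proposition~\ref{cor:Fsummands} applies to $\Sigma(2,2n+1,4n+3)$ under the hypothesis that it arises as large surgery on a link.

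Next I would argue by contradiction. Suppose $\Sigma(2,2n+1,4n+3)$ is large surgery on a link in $S^3$. By Proposition~\ref{cor:Fsummands} we would have
\[
\HF^-(\Sigma(2,2n+1,4n+3)) \;\cong\; \F[U]\oplus \F^{\oplus m}
\]
for some $m\geq 0$. In this decomposition every torsion cyclic summand is isomorphic to $\F[U]/U$, so in particular $U\cdot \HF^-_{\mathrm{red}}=0$. On the other hand, Rustamov's calculation provides an $\F[U]/U^2$ summand in $\HF^-(\Sigma(2,2n+1,4n+3))$ for $n\geq 3$. Since $\F[U]$ is a PID, the decomposition of a finitely generated $\F[U]$-module into cyclic summands is unique up to reordering; in particular the presence of an $\F[U]/U^2$ summand is incompatible with the vanishing of $U$ on the torsion part. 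This contradiction proves the corollary.

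There is no real obstacle here: the work is already packaged into Proposition~\ref{cor:Fsummands} and Rustamov's theorem, and the corollary follows from the uniqueness of the invariant factor decomposition over the PID $\F[U]$. The only thing to be careful about is matching conventions, namely that Rustamov's $\F[U]/U^2$ summand is indeed a summand of $\HF^-$ (rather than, say, only appearing in some filtration or in $\HF^+$), so that it genuinely obstructs the shape $\F[U]\oplus \F^{\oplus m}$.
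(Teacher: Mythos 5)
Your argument is correct and is essentially the proof the paper intends (the paper leaves it implicit, placing the corollary directly after Proposition~\ref{cor:Fsummands} and the citation of Rustamov's $\F[U]/U^2$ summand). The added coprimality check and the remark about $\HF^-$ versus $\HF^+$ conventions are reasonable bookkeeping but do not change the route.
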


\begin{subsection}*{Acknowledgments}
    The authors would like express their gratitude to Subhankar Dey, Franceso Lin, Duncan McCoy, Peter Ozsv\'ath, Jacob Rasmussen, Charles Stine, Andr\'as Stipsicz, and Zoltan Szab\' o  for a number of informative conversations.
\end{subsection}

\section{Three-manifolds arising as surgery on knots}
\subsection{Heegaard Floer groups} Recall that Heegaard Floer homology \cite{ozsvath2004holomorphic} associates to a pointed Heegaard diagram $\mathcal{H}=(\Sigma, \alpha_1\dots \alpha_g , \beta_1, \dots , \beta_g, z)$ of a three-manifold $Y$ a $\Q$-graded, finitely generated, free chain complex $\CF^-(\mathcal{H})$ over $\F[U]$. The differential counts $J$-holomorphic disks in the symmetric product $\text{Sym}^g(\Sigma)$. 

Since the chain homotopy type of $\CF^-(\mathcal{H})$ does not depend on $\mathcal{H}$ but only on the three-manifold $Y$ it is denoted by $\CF^-(Y)$. We shall also consider the localization $\CF^\infty(Y)= \CF^-(Y)\otimes \F[U, U^{-1}]$ and the quotient $\CF^+(Y)=\CF^\infty(Y)/\CF^-(Y)$. Note that the homologies of these chain complexes are related by an exact sequence
 \[\dots \to \HF^-(Y)\to \HF^\infty(Y) \to \HF^+(Y)\to \dots  \ ,\]
and that they decompose as direct sums along $\spin^c$ structures: 	
\[\CF^\circ(Y)=\bigoplus_{\s\in \Spinc(Y)} \CF^\circ(Y, \s)\ .\] 
This gives rise to a splitting $\HF^\circ(Y)=\bigoplus_{\s\in \Spinc(Y)} \HF^\circ(Y, \s)$ in homology, where  $\circ$ denotes any of the symbols $-,\infty, +$.

\begin{remark}The group $\HF^-(Y)$ is a finitely generated, graded $\F[U]$-module. Since $\F[U]$ is a principle ideal domain, $\HF^-(Y)$ can be decomposed as a direct sum of elementary modules of the form: 
\begin{enumerate}
    \item $T^-=\F[U]$ so called towers, and 
    \item $T^-_k:=F[U]/U^k$, $k\in\Z^{\geq 0}$.
\end{enumerate}
Note that for a rational homology sphere there is exactly one tower for each $\spin^c$ structure, that is $\text{rk}_{\F[U]} \  \HF^-(Y) =|\text{Spin}^c(Y)|=|H_1(Y;\Z)|$~\cite[Theorem 10.3]{ozsvath2004holomorphic}.
\end{remark}

\begin{remark}The group $\HF^+(Y)$ is complete torsion when considered as a module over $\F[U]$. That is: $U^i\cdot \alpha=0$ for all $\alpha \in \HF^+(Y)$, and $i>>0$. Nevertheless, there is a decomposition $\HF^+(Y)= F\oplus \HF^+_\text{red}(Y)$ where $F$ is a direct sum of a few copies of $T^+=\F[U,U^{-1}]/U\cdot \F[U]$ , and $\HF_\text{red}^+(Y) \simeq \text{Tor}(\HF^-(Y))$. When working with the $\HF^+$ version we prefer the notation $T^+_k$. 
\end{remark}
 
\subsection{Knot Floer groups: one complex to rule them all} Heegaard Floer homology also associates to each knot $K\subset S^3$ a filtered chain complex that can be used to compute the Heegaard Floer homology groups of all surgeries along $K$. 
  
\begin{defi}A  knot type chain complex is a finitely-generated, graded, $\Z \oplus\Z$-filtered chain complex 
\[C_*= \left( \bigoplus_{\x \in B} \F[U, U^{-1}],\  \partial\  \right)\] 
over the ring $\F[U,U^{-1}]$ satisfying the following properties
\begin{itemize}
\item $\partial$ is $\F[U, U^{-1}]$-linear, and given a basis element $\x \in B$, $\partial \x = \sum_\y n_{\x, \y}U^{m_{\x,\y}} \cdot \y$ for suitable coefficients $ n_{\x, \y} \in \F$, and non-negative exponents $m_{\x, \y} \geq 0$,
\item multiplication by $U$ drops the absolute $\Q$-grading  (Maslov grading) $M$ by two, and the two $\Z$-filtrations (denoted by $\mathcal{F}$ and $j$, and called the Alexander and algebraic filtrations) by one.
\item there is a grading-preserving  isomorphism $H_*(C_*, \partial)\simeq  \F[U, U^{-1}]$. 
\end{itemize} 
\end{defi}

A knot $K\subset S^3$ can be encoded by the choice of an additional basepoint $w$ in a suitable Heegaard diagram  $\mathcal{H}$ of $S^3$. As explained in \cite{ozsvath2005knot} and~\cite{Rasmussen}, this gives rise to a filtration $\mathcal{F}_K$ of $\CF^\infty(\mathcal{H})$ and hence to a knot type complex 
\[\CFK^\infty(K)=(\CF^\infty(S^3), \mathcal{F}_K, j) \ , \] 
where $j$ denotes the filtration with  $j(\x)=0$ for every intersection point $\x$ of the Heegaard diagram.
A knot type chain complex is usually presented by a planar drawing. This is done according to the following procedure:
\begin{itemize}
\item represent each $\F$-generator $U^m \cdot \x$ of $C_*$, $\x \in B$, as a point on the planar lattice $\Z \times \Z
  \subset \R^2$ in position $(i,j)=\left(A(\x)-m, j(\x)-m \right) \in \Z \times
  \Z$,
\item label each $\Z_2$-generator $U^m \cdot \x$ of $C_*$ with its Maslov grading $M(\x)-2m\in \Z$,
\item connect two $\F$-generators $U^n \cdot \x$ and $U^m \cdot \y $ with an arrow if in the differential of $U^n \cdot \x$ the coefficient of $U^m \cdot \y$ is non-zero.
\end{itemize}
With these conventions in place any region of the plane $S\subset \R^2$ identifies a sub-vector space $C_*S=\text{Span}_\F\{U^m \cdot \x\in C_*: \left(A(\x)-m , j(\x)-m\right) \in S\}$ of $C_*$. Note that for $C_*S$ to be a sub-complex of $C_*$ the regions $S$ needs to satisfy a certain convexity condition:

\begin{defi}
A region of the Euclidean plane $S \subset \R^2$ is south-west if it is closed, non-empty, and such that: $(\overline{x}, \overline{y}) \in S \Rightarrow \{ (x,y) \ | \ x\leq \overline{x}, y \leq \overline{y}\} \subseteq S.$
\end{defi}

The other case in which $C_*S$ is a chain complex is when $S=S_1\setminus S_2$ can be expressed as the difference of two south-west regions $S_2\subset S_1$, and $C_*S$ is identified with  quotient chain complex $C_*S_1/C_*S_2$. Note that the latter is not free: if $x\in S_1$ is a generator and $U^i\cdot x \in S_2$ then $U^i\cdot x=0$ in $C_*S$. 

Thus  we get chain complexes $C_*^-=C_*\{j\leq 0\}$, and $C_*^+=\{j\geq 0 \}=C_*/C_*^-$. In the case of knot Floer homology these chain complexes compute $\HF^-(Y)=\F[U]$ and $\HF^+(Y)=\F[U,U^{-1}]/U\F[U]$ respectively.

\subsection{Review of the knot surgery formula} \label{sec:surgery}We now review the homological algebra governing the knot surgery formula. 

Let $C_*$ be a knot type chain complex. For $s\in \Z$ let $A_s^+=C_*\{\max(i, j-s)\geq 0 \}$ and $B^+_s=C_*\{j\geq 0\}=B^+$. We define maps $v_s^+:A_s^+\to B^+$ and $h_s^+: A_s^+\to B^+$. The first is the quotient 
\[v_s^+:A_s^+ =C_*\{\max(i, j-s)\geq 0 \} \to C_*\{i\geq 0\}=B^+  \] 
with kernel $C_*\{j\geq s\text{ and }i<0\}$. The definition of $h_s^+$ is slightly more complicated and requires the choice of some additional data as in the following definition.

\begin{defi}[Flip map] A flip map is a chain homotopy equivalence $n: C_*\{j\geq 0\} \to C_*\{i\geq 0\}$.  A master type complex is a pair $(C_*, n)$ where $C_*$ is a knot type complex, and $n$ is a flip map of $C_*$.  
\end{defi} 

Note that if $C_*=\CFK^\infty(K)$ is the knot Floer complex of some knot in $S^3$, $C_*\{i\geq 0\}$ and  $ C_*\{j\geq 0\}$ represent two chain complexes for $\CF^+(S^3)$ with respect to the choice of two different basepoints on the same Heegaard diagram. The flip map $n: C_*\{j\geq 0\} \to C_*\{i\geq 0\}$ is induced by a sequence of Heegaard moves taking one basepoint into the other. We call $(\CFK^\infty(K), n)$ the \emph{knot Floer master complex}.

For a master type complex $(C_*,n)$ we define $h_s^+: A_s^+\to B^+$ as the composition 
\[h^+_s: A_s^+=C_*\{\max(i, j-s)\geq 0 \} \to C_*\{j\geq s\}\to C_*\{j\geq 0\} \to C_*\{i\geq 0\} =B^+ \]
where the first map is the projection $\pi:C_*\{\max(i, j-s)\geq 0 \} \to C_*\{j\geq s\}$,  the second map $C_*\{j\geq s\}\to C_*\{j\geq 0\}$  is multiplication by $U^s$, and the last one is the flip map $n: C_*\{j\geq s\} \to C_*\{i\geq 0\}$. 

Finally we set
\[\mathbb{A}^+=\bigoplus_{s\in \Z} A^+_s \ \ \ \text{ and }\ \ \ \mathbb{B}^+= \bigoplus_{s\in \Z} B^+_s \ .\] 
We have maps $v^+=\sum_s v_s^+: \mathbb{A}^+\to \mathbb{B}^+$,  and $h^+=\sum_s h_s^+: \mathbb{A}^+\to \mathbb{B}^+$. We define the map $D_p^+ :\mathbb{A}^+\to \mathbb{B}^+$ as  the sum $D_p^+: v^++ \sigma_p 	\circ h^+ $, where $\sigma_p: \bigoplus_{s\in \Z} B^+\to \bigoplus_{s\in \Z} B^+$ denotes the shift map $\sigma_p:(\dots, a_s, \dots ) \mapsto (\dots, b_s=a_{s-p}, \dots )$. Thus:  
\[D^+_p: (\dots, a_s, \dots) \mapsto(\dots,b_s= v_s^+(a_s)+h_{s-p}^+(a_{s-p}), \dots) \ .\]
Taking the mapping cone of the map $D_p^+ :\mathbb{A}^+\to \mathbb{B}^+$  we get a chain complex $\mathbb{X}^+(p) = \left( \mathbb{A}^+ \oplus \mathbb{B}^+ \ , \partial_\text{cone} = \partial + D_p^+\right)$
 that in \cite{ozsvath2004knotintegersurgery} Ozsv\'ath and Szab\' o show to be quasi-isomorphic to $\CF^-(S^3_{p}(K))$.

There is also a rational surgery formula.  Set $\mathbb{A}_i^+=\underset{s\in\Z}{\bigoplus}(s,A^+_{\lfloor\frac{i+ps}{q}\rfloor}(K))$ and ${\mathbb{B}^+=\underset{s\in\Z}{\bigoplus}(s,B^+)}$. Let $\mathbb{X}^+_{i,p/q}$ be the chain complex defined as the mapping cone of the map $D^+_{i,p/q,j}:\mathbb{A}_i^+\to \mathbb{B}^+$, which we define by $(s,A_{\lfloor\frac{i+ps}{q}\rfloor})\to (s,B^+)+ (s+1,B^+)$ by $\psi_m:(s,x)\mapsto ((s,v_m^+(x)),(s+1,h_m^+(x)),)$ and extending linearly. In~\cite{ozsvath2010knotrationalsurgeries} Ozsv\' ath and Szab\' o show that $\HF^+(S^3_{p/q}(K))$ is isomorphic to $H_*(\mathbb{X}^+_{i,p/q})$.

\subsection{Modules with similar skylines}

The following definition and lemma will be used repeatedly in the remainder of the paper.

\begin{defi} Let $A$ be a finitely generated $\F[U]$ module with 
\[A\simeq \underset{1\leq i\leq n}{\bigoplus}T^-\oplus\underset{1\leq j\leq m}{\bigoplus}T_{h(j)}^- \ .\] 
where $h(j)>0$ for all $j$. If $B$ is a finitely generated $\F[U]$ module we say that $A$ and $B$ have similar skylines if  
\[B\simeq \underset{1\leq i\leq n}{\bigoplus}T^-\oplus\underset{1\leq j\leq m}{\bigoplus}T_{h'(j)}^-\oplus\F^k\] 
for some function $h'$ with $|h'(j)-h(j)|\leq 1$ for all $j$ and $k\geq 0$.
\end{defi}

\begin{remark}
 Note that ``having similar skylines" is a reflexive relation: if $B$ has a similar skyline to $A$ then $A$ has a similar skyline to $B$. 
\end{remark}

  We say that an $\F[U]$-module $M$ is \emph{of $\HF^+$-type} if $$M\simeq \underset{1\leq i\leq n}{\bigoplus}T^+\oplus\underset{1\leq j\leq m}{\bigoplus}T_{h(j)}^+,$$ for some $m,n,h$. In this case we write \[M_\text{red}=\underset{1\leq j\leq m}{\bigoplus}T_{h(j)}^+\] for the so called \emph{reduced} part.   Note that for any three-manifold $\HF^+(Y)$ is of $\HF^+$-type. There is a natural notion of similar skylines for modules of $\HF^+$-type, defined exactly as in the case of finitely generated $\F[U]$-modules.

\begin{lem}\label{lem:heightchanges} Let $n\in\Z_{\geq 0}$. Suppose that we have an exact triangle 
\begin{equation}
\centering
\begin{tikzcd}  \F^n \ar[dr, "f"] && B \ar[ll,"g"]  \\&A\ar[ur,"h"]  \end{tikzcd}           
 \end{equation}
of either finitely generated $\F[U]$-modules, or modules of $\HF^+$-type. Then $A$ and $B$ have similar skylines.
\end{lem}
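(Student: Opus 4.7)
The plan is to derive from the exact triangle a short exact sequence and then analyze $B$ as a two-step modification of $A$: a quotient by a $U$-annihilated submodule followed by an extension by a free $\F$-module.

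From the cyclic long exact sequence associated to the triangle, I would extract the short exact sequence
\[0 \to A/I \to B \to K \to 0,\]
where $I := \mathrm{im}(f) \subseteq A$ and $K := \ker(f) \subseteq \F^n$ are $\F$-vector spaces annihilated by $U$. In particular $I \subseteq \ker(U : A \to A)$, and for any decomposition $A \cong \bigoplus T^{\pm} \oplus \bigoplus_j T^{\pm}_{h(j)}$ the $U$-kernel is spanned by the top (i.e., $U$-annihilated) elements of the torsion summands together with, in the $\HF^+$-type case, the tops of the $T^+$ towers (since $\ker(U) \cap T^+ = \F \cdot 1$, while $\ker(U) \cap T^- = 0$).

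The first step is to analyze $A/I$. After an elementary-divisor-style change of basis I would arrange $I$ to be spanned by tops of a chosen subset of summands; the quotient is then computed componentwise. A torsion summand $T^{\pm}_h$ whose top lies in $I$ becomes $T^{\pm}_{h-1}$ (or vanishes if $h = 1$); a $T^+$ tower whose top lies in $I$ remains isomorphic to $T^+$ since $T^+/\F \cdot 1 \cong T^+$; and no $T^-$ tower is ever affected. Hence $A/I$ has the same tower rank as $A$, with torsion heights either unchanged or decreased by one. The second step is to analyze $B$ as an extension classified by some $\alpha \in \mathrm{Ext}^1_{\F[U]}(K, A/I) \cong \mathrm{Hom}_\F(K, (A/I)/U(A/I))$. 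Writing $K = \ker(\alpha) \oplus K_1$, the summand $\ker(\alpha) \cong \F^{\dim\ker(\alpha)}$ splits off as free $\F$-summands of $B$, while the restriction $\alpha|_{K_1}$ is injective into $(A/I)/U(A/I)$. A further change of basis makes each attachment correspond to a distinct generator of $A/I$: an attachment at the top of a torsion summand $T^{\pm}_h$ produces $T^{\pm}_{h+1}$, while an attachment at a $T^-$ tower leaves it isomorphic to $T^-$; in the $\HF^+$-type case no tower attachments occur at all, since $\mathrm{Ext}^1_{\F[U]}(\F, T^+) = T^+/UT^+ = 0$.

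Combining the two steps, $A$ and $B$ have the same tower rank, each torsion summand of $A$ matches a summand of $B$ whose height differs by at most one (where ``height $0$'' denotes an absent summand), and any surplus $\F$-summands in $B$ from split extensions are absorbed into the $\F^k$ term of the definition; this is precisely the similar-skyline condition. The main obstacle I anticipate is the elementary-divisor-type lemma underlying both steps: that any $\F$-subspace $V \subseteq \ker(U) \cap A$ can be turned into a coordinate subspace by re-choosing a direct-sum decomposition of $A$ with the same multiplicities of each summand type. I would prove this by induction on $\dim V$, picking a non-zero $w \in V$, selecting a summand of minimal height among those contributing to $w$, modifying its generator so that a scalar multiple of $w$ becomes its new top, and recursing on the complementary summands. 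A minor but fiddly bookkeeping issue is to match summands correctly when a height-$1$ summand of $A$ is killed in Step~1 and only partially resurrected by a split extension in Step~2.
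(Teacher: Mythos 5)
Your proposal is correct but takes a genuinely different route from the paper's proof. The paper adapts a direct-sum decomposition of $A$ to the map $f$ so that $f$ sends a subset of the standard basis of $\F^n$ onto the $U$-annihilated generators of a chosen subset of cyclic summands of $A$, and then traces elements through $h$ and $g$, using $U$-equivariance of $h$, exactness ($\mathrm{im}\,f = \ker h$, $\mathrm{im}\,h = \ker g$), and injectivity of $h$ and $g$ off the appropriate kernels to bound the heights of the summands of $B$ from above and below. You instead extract from the triangle the short exact sequence $0 \to A/\mathrm{im}(f) \to B \to \ker(f) \to 0$ and compute $B$ in two algebraic stages: a quotient by a $U$-annihilated $\F$-subspace, followed by an extension classified by $\mathrm{Ext}^1_{\F[U]}(\ker f, A/\mathrm{im}\,f) \cong \mathrm{Hom}_\F(\ker f, (A/\mathrm{im}\,f)/U(A/\mathrm{im}\,f))$. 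Both arguments ultimately rest on the same elementary-divisor style re-decomposition of a module relative to a distinguished $\F$-subspace; the paper leaves it implicit while you identify it and sketch an inductive proof. What your route buys is a cleaner separation of the two ways heights can change (drop by one in the quotient step; rise by one, or spawn a new $\F$-summand, in the extension step), and the observation $T^+/UT^+ = 0$ neatly explains why the $T^+$ towers are inert in the $\HF^+$-type case, which the paper dispatches with an ``analogous'' remark. One small correction to your bookkeeping: the normalization needed in the extension step is dual to the one in the quotient step. When absorbing a lift of an extension class $\alpha(k)$ into a single cyclic summand, you must choose a summand of \emph{maximal} height among those to which $\alpha(k)$ has a nonzero component, since a lower-height summand cannot carry a generator of the required order; the minimal-height choice you describe is correct for the quotient step (where you align a $U$-annihilated element with the top of a summand) but fails for the extension step.
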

\begin{proof}

We treat the case of finitely generated $\F[U]$-modules. The case of modules of $\HF^+$ type can be proved analogously.

Let $x$ be an element of $\F^n$. Observe that $Uf(x)=0$.  Write $\F^n\simeq\F^{n_a}\oplus\F^{n_b}$ and $A= \underset{1\leq j\leq m_a}{\bigoplus}T_{h_a(j)}^-$, where we allow $h_a(j)=\infty$, with $f$ the map that sends the first $n_a$ components of $\F^n$ into the bottom generators of the first $n_a$ towers of $\underset{1\leq j\leq n_a}{\bigoplus}T_{h_a(j)}^-$.

Consider now the maps $h$ restricted to each tower $T^-_{h_a(j)}$. Since $h$ is $U$ equivariant we have that $U^{h(a_j)-1}h(T_{h(a_j)}^-)=0$ for $1\leq j\leq n_a$ and that $U^{h(a_j)}h(T_{h(a_j)}^-)=0$ for $ n_a<j\leq m_a$. Since $h$ is injective when restricted to elements that are not at the bottom of the first $m_a$ towers we can write $B= \underset{1\leq j\leq n_a}{\bigoplus}T_{h_b(j)}^-\oplus \underset{n_a<j\leq m_a}{\bigoplus}T_{h_b(j)}^-\oplus  \underset{m_a< j\leq m_b}{\bigoplus}T_{h_b(j)}^-$, where $h_b(j)\geq h_a(j)-1$ for $1\leq j\leq n_a$ and $h_b(j)\geq h_a(j)$ for $n_a< j\leq m_a$. Here we again allow $h_a(j)=\infty$. Note that $g$ is $U$ equivariant and injective on the complement of the image of $h$. It follows in turn that  $h_a(j)\geq h_b(j)\geq h_a(j)-1$ for $1\leq j\leq n_a$ and $h_a(j)+1\geq h_b(j)\geq h_a(j)$ for $n_a< j\leq m_a$ and $h_a(j)=1$ for $m_a< j\leq m_b$. The result follows.
\end{proof}

\subsection{Large integer surgeries}
In this section we prove that large integer surgeries on knots have Heegaard Floer homology that satisfies the strong geography restriction.

As is customary we label $\spin^c$ structures on the surgery three-manifold $S^3_n(K)$ with integers $-|n|/2< s \leq |n|/2$ so that we can use the Ozsv\' ath-Szab\' o  surgery formula to get an identification 
\[H_*(A^+_s) \simeq \HF^+(S^3_n(K), s) \ .\]
for any integer $n \geq 2g(K)-1$, where $g(K)$ denotes the Seifert genus of the knot $K$. 

\begin{lem}\label{lem:largesimilar}
Let $K \subset S^3$ be a knot, and $n\geq 2g(K)-1$.  Then $\HF^+(S^3_n(K),s)$ and $\HF^+(S^3_n(K),s+1)$ have similar skylines.
\end{lem}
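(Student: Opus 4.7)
The plan is to reduce the statement to an application of Lemma~\ref{lem:heightchanges}. Since $n \geq 2g(K) - 1$, the large surgery formula identifies $H_*(A_s^+) \cong \HF^+(S^3_n(K), s)$ and $H_*(A_{s+1}^+) \cong \HF^+(S^3_n(K), s+1)$, so it suffices to exhibit an exact triangle relating $H_*(A_s^+)$ and $H_*(A_{s+1}^+)$ whose third vertex is a finite-dimensional $\F$-vector space.

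First I would realize both $A_s^+$ and $A_{s+1}^+$ as quotients of $C_* = \CFK^\infty(K)$; explicitly $A_s^+ \cong C_*/C_*\{i < 0, j < s\}$ and $A_{s+1}^+ \cong C_*/C_*\{i < 0, j < s+1\}$. The inclusion of south-west subcomplexes $C_*\{i < 0, j < s\} \subset C_*\{i < 0, j < s+1\}$ then induces a short exact sequence of chain complexes
\begin{equation*}
0 \to C_*\{i < 0, j = s\} \to A_s^+ \to A_{s+1}^+ \to 0,
\end{equation*}
in which the kernel is the ``step'' of generators sitting on the horizontal line $j = s$ strictly to the left of the vertical axis.

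The key technical point is to verify that this step has finite-dimensional $\F$-homology. Since $\CFK^\infty(K)$ is finitely generated as an $\F[U, U^{-1}]$-module by some finite set $B$, each basis element $\x \in B$ contributes exactly one $\F$-generator to the row $\{j = s\}$, namely $U^{j(\x) - s} \cdot \x$. Hence the step contains at most $|B|$ generators, and its homology is of the form $\F^m$ for some $m \geq 0$.

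The long exact sequence in homology is then an exact triangle
\begin{equation*}
\F^m \to H_*(A_s^+) \to H_*(A_{s+1}^+) \to \F^m,
\end{equation*}
to which Lemma~\ref{lem:heightchanges}, in its $\HF^+$-type formulation, directly applies to yield that $H_*(A_s^+)$ and $H_*(A_{s+1}^+)$ have similar skylines, completing the proof. I do not expect a serious obstacle; the only subtlety is bookkeeping with the south-west-region conventions to confirm the short exactness of the displayed sequence and to check that the step is genuinely a subcomplex (differentials leaving the row necessarily land in the south-west region $\{i < 0, j < s\}$, and hence vanish in $A_s^+$).
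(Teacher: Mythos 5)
Your proof is correct and matches the paper's argument almost line for line: the same short exact sequence $0 \to C_*\{i<0,\ j=s\} \to A_s^+ \to A_{s+1}^+ \to 0$, the same observation that the kernel has finite-dimensional $\F$-homology, and the same appeal to Lemma~\ref{lem:heightchanges}. Your extra remarks verifying finite-dimensionality via the generating set $B$ and confirming the step is a genuine subcomplex are correct bookkeeping that the paper leaves implicit.
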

\begin{proof}
It follows from the surgery formula that for $n\geq 2g(K)-1$, $\HF^+(S^3_n(K),m)$ is isomorphic to $H_*(A_m)$ for any $m\in\Z\cap(-|n|/2,|n|/2]$. Note that there is a short exact sequence of chain complexes
\begin{center}
 \begin{tikzcd}
 0 \ar[r] & C(j=m,i<0) \ar[r] & A_m \ar[r] & A_{m+1} \ar[r] & 0
\end{tikzcd}
\end{center}
 which yields an exact triangle
\begin{center}
\begin{tikzcd} H_*(j=m,i<0) \ar[dr] && H_*(A_{m+1}) \ar[ll]  \\&H_*(A_m) \ar[ur] \end{tikzcd}           
\end{center}
That is, we have the following exact triangle
\begin{equation} \begin{tikzcd}  H_*(j=m,i<0) \ar[dr] && \HF^+(S^3_n(K),m+1) \ar[ll]  \\&\HF^+(S^3_n(K),m) \ar[ur] \end{tikzcd}          
\end{equation}
Observe that $ H_*(j=m,i<0)\simeq\F^k$ for some $k\in\Z_{\geq 0}$ so we can apply Lemma~\ref{lem:heightchanges} and obtain the desired result. 
\end{proof}

\begin{cor}\label{cor:largestrong}Let $K \subset S^3$ be a knot. For any integer $n\geq 2g(K)-1$ the module 
\[\HF^+(S^3_n(K))\simeq \bigoplus_{-|n|/2< i \leq |n|/2} H_*(A^+_s) \]
satisfies the strong geography restriction. 
\end{cor}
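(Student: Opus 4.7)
The plan is to run a discrete intermediate value argument on the maximum $T^+_k$-summand height of $H_*(A^+_s)$ as $s$ varies, with the step size controlled by the similar-skylines output of Lemma~\ref{lem:largesimilar}.

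For each $s \in \Z$, I will let $\ell_s$ denote the largest $k \geq 0$ for which $H_*(A^+_s)$ has a $T^+_k$ summand, with the convention $\ell_s = 0$ if no such summand exists. The short-exact-sequence argument used in the proof of Lemma~\ref{lem:largesimilar} is valid for every integer $s$, not just those in the $\spin^c$ range of $S^3_n(K)$, so $H_*(A^+_s)$ and $H_*(A^+_{s+1})$ always have similar skylines. Because the similar-skylines relation allows each paired summand height to change by at most one and any new $\F$ summand only contributes height one, the bound $|\ell_s - \ell_{s+1}| \leq 1$ follows. Combining this with the standard fact that $v^+_s \colon A^+_s \to B^+$ is a quasi-isomorphism for $s \geq g(K)$, and symmetrically $h^+_s$ for $s \leq -g(K)$ --- so that $\ell_{\pm g(K)} = 0$ --- iterating the step bound yields $\ell_s \leq g(K) - |s|$ whenever $|s| \leq g(K)$; in particular $\ell_{\pm(g(K)-1)} \leq 1$.

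Next, I will set $\ell := \min\{k \geq 0 : U^k \cdot \HF^+(S^3_n(K))_{\text{red}} = 0\}$; using the $\spin^c$-decomposition $\HF^+(S^3_n(K)) = \bigoplus_{s \in \Z \cap (-|n|/2, |n|/2]} H_*(A^+_s)$, this $\ell$ is precisely the maximum of $\ell_s$ over $s$ in the $\spin^c$ range. Choose $s^*$ in the range with $\ell_{s^*} = \ell$. Since $n \geq 2g(K) - 1$, the $\spin^c$ range contains both integers $\pm(g(K)-1)$, at each of which $\ell_s \leq 1$. The integer sequence $(\ell_s)$ along the stretch from $s^*$ to either such endpoint stays inside the $\spin^c$ range and changes by at most one per step, so a discrete intermediate value argument produces, for every $k \in \{1, 2, \dots, \ell\}$, an index $s_k$ in the range with $\ell_{s_k} = k$. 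Each $H_*(A^+_{s_k})$ then contributes a $T^+_k$ summand, and combining these across distinct $\spin^c$ structures exhibits $T^+_\ell \oplus T^+_{\ell-1} \oplus \cdots \oplus T^+_1$ as a direct summand of $\HF^+(S^3_n(K))$, which is the desired strong geography restriction.

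The main obstacle will be the extremal case $n = 2g(K)-1$, where the $\spin^c$ range is exactly $[-g(K)+1, g(K)-1]$ and contains no $s$ with an automatically vanishing $\ell_s$. The bootstrap inequality $\ell_s \leq g(K) - |s|$, derived by appealing to values of $s$ outside the $\spin^c$ range (where $A^+_s$ is quasi-isomorphic to $B^+$), is what guarantees $\ell_{\pm(g(K)-1)} \leq 1$ and keeps the intermediate value argument alive in this edge case.
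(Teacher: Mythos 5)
Your proof is correct and takes essentially the same route as the paper's: both arguments rest on Lemma~\ref{lem:largesimilar} (adjacent $A^+_s$ have similar skylines), the eventual vanishing of the torsion part of $H_*(A^+_s)$ for $|s|\geq g(K)$, and a discrete intermediate value argument. The only cosmetic difference is that you package the intermediate value step via the explicit height function $\ell_s$ and the inequality $\ell_s\leq g(K)-|s|$ to pin down the endpoint, whereas the paper simply walks the sequence $H_*(A^+_s), H_*(A^+_{s+1}),\dots$ until it stabilizes at $T^+$; both handle the extremal case $n=2g(K)-1$ for the same reason (the torsion summands live at indices $|i|\leq g(K)-1$, all of which lie in the $\spin^c$ range once $n\geq 2g(K)-1$).
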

\begin{proof} If $K$ is an $L$-space knot there is nothing to prove, otherwise there is a $\spin^c$ structure labelled by some integer $s$ (with $-|n|/2 < s \leq |n|/2$) containing a copy of $\F[U]/U^N$, where $N=\min\{ k \geq 0 \ | \ U^k\cdot \HF^+(S^3_n(K)) =0\}$, as direct summand. Applying Lemma \ref{lem:largesimilar} repeatedly we conclude that
\[ \HF^+(S^3_n(K),s), \  \HF^+(S^3_n(K),s+1),\  \HF^+(S^3_n(K),s+2), \  \dots    \]
form a sequence where each term has a similar skyline to its successor. Furthermore, since $A_s\simeq\CF^+(S^3)$ for $s\geq g(K)$, we have that the modules in the sequence are eventually isomorphic to $\HF^+(S^3)\simeq T^+$. At this point we observe that the intermediate value principle holds, that is: given a sequence of modules with similar skylines that is eventually torsion-free, if the first module contains a copy of $T^+_N=\F[U]/U^N$ then a copy of $T^+_i$ for every $1\leq i<N$ should appear at some point along the sequence. It follows that 
\[T_N^+\oplus  T_{N-1}^+\oplus \dots \oplus T_1^+\subseteq \bigoplus_{ s\leq i \leq |n|/2 } \HF^+(S^3_n(K),i) \ ,\]
and we are done. 
\end{proof}

\subsection{Small integer surgeries} In this section we prove that the Lin geography restriction holds for integer surgeries on non $L$-space knots. This can be viewed as a warm up for the next section in which we show that the Heegaard Floer homology of any non-trivial surgeries on any knot satisfies the strong geography restriction. Recall that a knot $K \subset S^3$ is an $L$-space knot if $S^3_{n}(K)$ is an $L$-space for some integer $n \geq 0$. 

 \begin{lem}\label{lem:largestrong}Let $K \subset S^3$ be a knot. If $L$ is not an $L$-space knot then the module $\HF^+(S^3_n(K))$ satisfies the Lin geography restriction. 
\end{lem}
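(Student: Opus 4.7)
The plan is to reduce to the large surgery case handled by Corollary~\ref{cor:largestrong} and then propagate via the integer surgery exact triangle. For $n\geq 2g(K)-1$, since $K$ is not an $L$-space knot, $\HF^+(S^3_n(K))$ has non-trivial $U$-torsion, so Corollary~\ref{cor:largestrong} forces a $T^+_1\simeq\F$ summand as part of the strong geography restriction. The case $n\leq -(2g(K)-1)$ follows by applying the same argument to the mirror knot $\bar K$.

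For intermediate integer surgeries ($n\neq 0$ with $|n|<2g(K)-1$), I would induct downward from $n=2g(K)-1$ and upward from $n=-(2g(K)-1)$ using the integer surgery exact triangle
\[
\HF^+(S^3) \xrightarrow{f} \HF^+(S^3_n(K)) \xrightarrow{g} \HF^+(S^3_{n+1}(K)) \xrightarrow{h} \HF^+(S^3)[1].
\]
Given a generator $\alpha\in\HF^+(S^3_{n+1}(K))$ of a $T^+_1$ summand coming from the inductive hypothesis, the plan is to lift $\alpha$ along $g$ to an element $\beta\in\HF^+(S^3_n(K))$, and then adjust $\beta$ by an element of $\mathrm{im}(f)\subseteq\ker g$ to arrange $U\beta=0$ while preserving $g(\beta)=\alpha$. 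The key enabling observation is that $U$ acts surjectively on $\HF^+(S^3)=T^+$ and hence on $\mathrm{im}(f)$, which is what makes the adjustment possible. From $g(\beta)=\alpha$ one then deduces $\beta\notin U\cdot\HF^+(S^3_n(K))$: otherwise $\alpha\in U\cdot\HF^+(S^3_{n+1}(K))$, contradicting that $\alpha$ generates a $T^+_1$ summand. Combined with $U\beta=0$, and the description of $U$-killed elements modulo $U$ in finitely generated modules of $\HF^+$-type, this forces $\HF^+(S^3_n(K))$ to contain a $T^+_1$ direct summand.

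The main obstacle is that Lemma~\ref{lem:heightchanges} does not apply directly here, since the third term $\HF^+(S^3)=T^+$ in the exact triangle is not a finite-dimensional $\F$-vector space, so the simple "similar skylines" argument used for the large surgery case is unavailable. The hard part is actually producing the lift $\beta$: this requires $\alpha\in\ker h=\mathrm{im}(g)$. Since $h$ sends the $U$-kernel of $\HF^+(S^3_{n+1}(K))$ into the one-dimensional $U$-kernel of $T^+$, one can typically select $\alpha$ in its $T^+_1$ summand (possibly after adding a suitable tower-bottom element) so that $h(\alpha)=0$. The potentially degenerate situation in which every candidate generator has $h(\alpha)\neq 0$ and no such correction is possible would require a supplementary argument, perhaps via the rotated exact triangle or by appealing directly to the mapping cone formula for $S^3_n(K)$.
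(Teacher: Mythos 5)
Your approach is genuinely different from the paper's, and as written it has a real gap that you yourself flag. The paper's argument never appeals to the surgery exact triangle or to induction on $n$; instead it works directly with the mapping cone $\mathbb{X}^+(n)$ from the integer surgery formula. The key observations there are (i) the map $(D_n)_*\colon H_*(\mathbb{A}^+)\to H_*(\mathbb{B}^+)$ is surjective (by an explicit zig-zag construction, valid for every integer $n$), so $\HF^+(S^3_n(K))\cong \ker (D_n)_*$; and (ii) $(D_n)_*$ annihilates the reduced part of $H_*(\mathbb{A}^+)$, which by the proof of Corollary~\ref{cor:largestrong} already contains a $T^+_1=\F$ summand. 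This is a one-shot argument for all $n$ simultaneously, and it is exactly the machinery re-used in the rational surgery case; no propagation from large to small surgeries is needed.

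The gap in your proposal is the one you acknowledge, and it is not merely cosmetic. Your inductive step requires a $T^+_1$-generator $\alpha\in\HF^+(S^3_{n+1}(K))$ lying in $\ker h=\mathrm{im}\, g$. You correctly observe that $h$ maps the $U$-kernel of $\HF^+(S^3_{n+1}(K))$ into the one-dimensional $U$-kernel of $T^+$, and that one can try to correct $\alpha$ by an element of $\ker U\cap U\cdot\HF^+$ (a tower-bottom). But nothing in the exact triangle forces such a corrector to exist. Abstractly, the configuration with a single $T^+_1$ summand whose generator satisfies $h(\alpha)\neq 0$ while $h$ kills every tower-bottom is a perfectly consistent module map, and the argument collapses there; the ``supplementary argument via the mapping cone formula'' you gesture at is precisely the paper's whole proof. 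A second, smaller issue: the surgery triangle for the triad $(n,\infty,n+1)$ reads $\HF^+(S^3_n(K))\to\HF^+(S^3)\to\HF^+(S^3_{n+1}(K))\to\cdots$, not $\HF^+(S^3)\to\HF^+(S^3_n(K))\to\HF^+(S^3_{n+1}(K))\to\cdots$; the two middle terms are transposed relative to what you wrote, which changes which map you lift along. I would recommend abandoning the exact-triangle induction and instead extracting the two mapping-cone observations above, which render the lemma immediate from Corollary~\ref{cor:largestrong}.
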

\begin{proof}As explained in Section \ref{sec:surgery} the surgery formula expresses the Heegaard Floer chain complex of the surgery $\CF^+(S^3_{n},s)$ as  the mapping cone $\mathbb{X}^+(n) $ of a map $D_n:\mathbb{A}^+ \to \mathbb{B}^+$. This has an associated short exact sequence 
\begin{center}
 \begin{tikzcd}
 0 \ar[r] & \mathbb{B}^+ \ar[r] & \mathbb{X}^+(n) \ar[r] & \mathbb{A}^+ \ar[r] & 0
\end{tikzcd}
\end{center} 
inducing an exact triangle 
\begin{center}
\begin{tikzcd}
\HF^+(S^3_n(K))\arrow[dr,"q_*"]&&H_*(\mathbb{B}^+)\arrow[ll,"i_*"]\\&H_*(\mathbb{A}^+)\arrow[ur,"(D_n)_*"]
\end{tikzcd}
\end{center}
whose connecting homomorphism is identified with the map $(D_n)_*:H_*(\mathbb{A}^+) \to H_*(\mathbb{B}^+)$ induced in homology. 

It is not hard to see that the map $(D_n)_*$ is surjective. Indeed given $b\in B^+_j \subset \mathbb{B}$ one can find a sequence $(a_s)_{s\in \Z} \in \mathbb{A}^+$ such that: $v_j(a_j)=b$, $v_{j+kn}(a_{j+kn})=h_{j+(k-1)n}(a_{j+(k-1)n})$, and $a_s=0$ otherwise. Consequently, we have a short exact sequence:
\begin{center}
 \begin{tikzcd}
 0 \ar[r] & \HF^+(S^3_n(K))   \ar[r] & H_*(\mathbb{A}^+) \arrow[r,"(D_n)_*"] & H_*(\mathbb{B}^+) \ar[r] & 0
\end{tikzcd}
\end{center} 
In particular $\HF^+(S^3_n(K))$ is identified with the kernel of $(D_n)_*$. Since $(D_n)_*$ sends the reduced part of $H_*(\mathbb{A}^+)$ to zero,  the result now follows from the proof of Corollary~\ref{cor:largestrong} where it was shown that $H_*(\mathbb{A}^+)$ contains an $\F$-summand. 
\end{proof}

 \subsection{Extension to the case of rational surgeries} We shall now extend the results from the previous sections, and prove Theorem~\ref{knots}. 
 
Suppose that $K$ is a knot in $S^3$. Let $V_i$ and $H_i$ be the invariants considered in \cite{ni2010cosmetic}, and $M_i:=\min\{V_i,H_i\}:\Z\to\Z_{\geq 0}$. Since they will be used frequently in this section we recall some properties of the invariants $M_i$.

\begin{lem}\label{lem:eventuallyzerotower}
    For sufficiently large $|i|$, $M_i=0$. \end{lem}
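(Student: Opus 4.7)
The plan is to bound $V_i$ and $H_i$ separately: we show that $V_i=0$ for all $i \geq g(K)$ and $H_i=0$ for all $i \leq -g(K)$. Since $M_i = \min\{V_i, H_i\}$, combining these forces $M_i = 0$ whenever $|i| \geq g(K)$, which proves the lemma.

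The key input is the fact that $\CFK^\infty(K)$ is supported in the strip $|A(\x)| \leq g(K)$, where $A$ is the Alexander grading and $g(K)$ is the Seifert genus of $K$. Under the planar conventions recalled above, a generator $U^m\cdot \x$ sits at coordinates $(i,j)=(A(\x)-m,-m)$, so the defining inequality $\max(i,j-s)\geq 0$ of $A_s^+$ rewrites as $m \leq \max(A(\x),-s)$. For $s \geq g(K)$ we have $-s \leq -g(K) \leq A(\x)$ on every generator, so the inequality collapses to $m \leq A(\x)$ and $A_s^+$ coincides with $B^+=C_*\{i\geq 0\}$ on the nose. Under this identification $v_s^+:A_s^+\to B^+$ is the identity map, so in particular it induces an isomorphism on the $T^-$-summand of homology. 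Since $V_s$ is the smallest exponent $k$ for which $U^k$ sends the tower generator of $H_*(A_s^+)$ into the image of the tower of $H_*(B^+)$ under $v_s^+$, we conclude $V_s=0$ for all $s \geq g(K)$.

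For $s \leq -g(K)$ the symmetric computation shows that $\max(A(\x),-s)=-s$ on the support of $C_*$, so $A_s^+$ collapses to $C_*\{j\geq s\}$. Multiplication by $U^s$ identifies this with $C_*\{j\geq 0\}$, and the flip map $n:C_*\{j\geq 0\}\to C_*\{i\geq 0\}$ is a chain homotopy equivalence by definition. Hence the composite $h_s^+$ induces an isomorphism on the tower part of homology, forcing $H_s=0$ in this range.

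No genuine obstacle arises: the content of the argument is the elementary observation that the Alexander grading bound $|A(\x)|\leq g(K)$ forces the region $\{\max(i,j-s)\geq 0\}$ to coincide, on the support of $C_*$, with $\{i\geq 0\}$ when $s \geq g(K)$ and with $\{j\geq s\}$ when $s \leq -g(K)$. This is a standard stabilization feature of the knot surgery formula, and one could alternatively cite it directly from \cite{ni2010cosmetic} where the invariants $V_i$ and $H_i$ were introduced.
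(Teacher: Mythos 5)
Your argument is correct and takes the same route as the paper, which simply observes that $V_i$ and $H_{-i}$ vanish for large $i$; you fill in the standard reason (the Alexander width bound $|A(\x)|\leq g(K)$ makes $A_s^+$ coincide with $B^+$ up to the quasi-isomorphisms built into $v_s^+$ or $h_s^+$ once $|s|\geq g(K)$), yielding the explicit bound $|i|\geq g(K)$.
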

\begin{proof}
  This follows from the fact that $V_i$ and $H_{-i}$ are zero for sufficiently large $i$.
\end{proof}

\begin{lem}\label{lem:eventuallyinfinitytower}
    $\lim_{i\to\infty}H_i=\lim_{i\to-\infty}V_i=\infty$. \end{lem}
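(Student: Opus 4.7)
The plan is to prove the two limits separately: first $\lim_{i \to \infty} H_i = \infty$ via a direct analysis of Maslov gradings, and then $\lim_{i \to -\infty} V_i = \infty$ via the conjugation symmetry of the knot Floer complex.

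For the first limit, I would exploit the factorization of $h_s^+$ as the composition
\[ A_s^+ \xrightarrow{\pi} C_*\{j \geq s\} \xrightarrow{\cdot U^s} C_*\{j \geq 0\} \xrightarrow{n} B^+ . \]
Here $\pi$ (the projection) and $n$ (the flip map) are of Maslov degree $0$, whereas multiplication by $U^s$ is of Maslov degree $-2s$; thus $h_s^+$ is a chain map of Maslov degree $-2s$. For $s \geq g(K)$, I would work in a reduced model of $\CFK^\infty(K)$ supported in Alexander grading $|A| \leq g(K)$: there the kernel $C_*\{j \geq s, i < 0\}$ of $v_s^+$ is identically zero, so $v_s^+$ is an isomorphism, and hence the tower summand of $H_*(A_s^+)$ is concentrated in Maslov grading $\geq 0$, exactly as for $H_*(B^+) = T^+$.

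I would then observe that any $U$-equivariant $\F$-linear map $T^+ \to T^+$ of Maslov degree $-2s$ (with $s \geq 0$) is either zero or equals multiplication by $U^s$, as one checks on the generators $\{U^{-k}\}_{k \geq 0}$. A direct trace through the factorization shows that the composite $h_s^+$ is not identically zero on the tower, forcing $H_s = s$ for $s \geq g(K)$, and hence $H_s \to \infty$. For the second limit, I would invoke the conjugation symmetry of $\CFK^\infty(K)$ (which swaps the algebraic and Alexander filtrations $i \leftrightarrow j$) to obtain the identity $V_{-s}(K) = H_s(K)$; combined with the first limit, this yields $\lim_{i \to -\infty} V_i = \infty$.

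The main obstacle, as I see it, is not conceptual but technical: carefully tracking the Maslov grading through the flip map $n$, and verifying the precise symmetry identity $V_{-s} = H_s$. Both facts are standard in the literature (cf.\ Hom-Wu or Hendricks-Manolescu-Zemke) but must be checked against the grading conventions used in this paper.
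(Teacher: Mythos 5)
Your argument is correct and amounts to unpacking what the paper compresses into the single sentence ``this follows from the definition.'' The grading count you perform---$h_s^+$ has Maslov degree $-2s$ while $v_s^+$ has degree $0$---is precisely the content behind the terse proof: it yields the general identity $H_s = V_s + s$, so that $H_s \geq s$ for all $s$ since $V_s \geq 0$; your restriction to $s \geq g(K)$ (where $V_s = 0$ and hence $H_s = s$) is the same computation specialized. The one step you flag as needing a ``direct trace''---that $h_s^+$ is not identically zero on the tower---is already built into the definition of $H_s$ as a finite non-negative integer, so you may simply cite it rather than re-derive it. The conjugation symmetry $V_{-i} = H_i$ you invoke to transfer the statement from $H_i$ to $V_i$ is the same identity the paper uses for Lemma~\ref{lem:symmetry}, so your route is fully consistent with the paper's.
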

\begin{proof}
  This follows from the definition of $V_i$ and $H_i$.
\end{proof}

\begin{lem}\label{lem:symmetry}
    $M_{-i}=M_{i}$. 
\end{lem}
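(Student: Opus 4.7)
The plan is to deduce the symmetry $M_{-i}=M_i$ from the conjugation symmetry of the knot Floer complex. Recall that for any knot $K\subset S^3$, the complex $\CFK^\infty(K)$ admits a filtered chain homotopy equivalence $\iota: \CFK^\infty(K)\to \CFK^\infty(K)$ that exchanges the algebraic and Alexander filtrations, i.e.\ swaps the $(i,j)$ coordinates. This is the standard consequence of the symmetry $\HFK(K,s)\cong \HFK(K,-s)$ realized at the chain level by changing the roles of the two basepoints.

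Under this swap, the subquotient $A_s^+=C_*\{\max(i,j-s)\geq 0\}$ is carried to $C_*\{\max(j,i-s)\geq 0\}$, which after relabeling is $A_{-s}^+$. The key point is how the two surgery maps behave. The map $v_s^+:A_s^+\to B^+=C_*\{i\geq 0\}$ is the projection that kills $C_*\{j\geq s,\, i<0\}$, while the map $h_s^+:A_s^+\to B^+$ projects to $C_*\{j\geq s\}$, multiplies by $U^s$, and applies the flip map to land in $C_*\{i\geq 0\}$. Under the conjugation $\iota$ the first and the third steps of $h_s^+$ are interchanged with the single projection step of $v_s^+$, and the indexing is negated. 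The upshot is a commutative diagram up to chain homotopy identifying $v_s^+$ with $h_{-s}^+$ (and $h_s^+$ with $v_{-s}^+$) after applying $\iota$ to source and target. Since $V_s$ (resp.\ $H_s$) is by definition the $U$-power shift induced by $v_s^+$ (resp.\ $h_s^+$) on the tower of $H_*(A_s^+)\to H_*(B^+)\cong\F[U]$, we conclude
\[
V_s=H_{-s}\qquad\text{and}\qquad H_s=V_{-s}.
\]

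Taking the minimum yields
\[
M_{-i}=\min\{V_{-i},H_{-i}\}=\min\{H_i,V_i\}=M_i,
\]
which is the required identity. The main (and essentially only) obstacle is just to verify carefully that the conjugation $\iota$ really does intertwine $v_s^+$ with $h_{-s}^+$, including the $U^{s}$ twist in the definition of $h_s^+$; this is, however, a standard fact about the surgery formula (see, e.g., the discussion of the involutive symmetry in \cite{ozsvath2004knotintegersurgery}), so no new ideas beyond bookkeeping of the filtrations are required.
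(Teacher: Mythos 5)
Your proof is correct and takes essentially the same route as the paper: both rest on the identity $V_i = H_{-i}$, from which $M_{-i}=M_i$ follows immediately by taking minima. The paper simply cites this symmetry as known, whereas you unpack it via the conjugation involution on $\CFK^\infty$; that additional explanation is sound but not a different argument.
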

\begin{proof}
This follows from the definition of $M_i$ together with the fact that $V_i=H_{-i}$.
\end{proof}

\begin{lem}\label{lem:changingsizetower}
  For $i\geq 0$ $M_i\geq M_{i+1}\geq M_{i}-1\geq 0$ and $M_{-i}\geq M_{-i-1}\geq M_{-i}-1\geq 0$. 
\end{lem}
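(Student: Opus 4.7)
The plan is to reduce both chains of inequalities to well-known monotonicity properties of the invariants $V_i$ and $H_i$, combined with the symmetry already recorded in Lemma~\ref{lem:symmetry}. Since that lemma gives $M_{-i}=M_i$ and $M_{-(i+1)}=M_{i+1}$, the chain $M_{-i}\geq M_{-i-1}\geq M_{-i}-1\geq 0$ is literally the same statement as $M_i\geq M_{i+1}\geq M_i-1\geq 0$, so it suffices to prove the latter for $i\geq 0$.

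The second step is to identify $M_i$ with $V_i$ on the non-negative side. The invariants satisfy two basic monotonicity properties that are classical in the surgery-formula literature: $V_i$ is non-increasing in $i$, while $H_i$ is non-decreasing in $i$. Moreover $V_0=H_0$, as follows immediately from the symmetry $V_i=H_{-i}$ used to prove Lemma~\ref{lem:symmetry}. Combining these, for $i\geq 0$ one has $V_i\leq V_0=H_0\leq H_i$, so $M_i=V_i$, and the same argument applied at index $i+1\geq 0$ gives $M_{i+1}=V_{i+1}$.

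The conclusion then follows from the standard step-one bound for the $V$ invariant, namely that consecutive values of $V_i$ differ by $0$ or $1$, i.e.\ $V_{i+1}\leq V_i\leq V_{i+1}+1$, or equivalently $V_i\geq V_{i+1}\geq V_i-1$. Combined with the non-negativity $V_i\geq 0$ that is built into the definition, and with the identifications $M_i=V_i$, $M_{i+1}=V_{i+1}$ from the previous paragraph, this yields
\[M_i=V_i\geq V_{i+1}=M_{i+1}\geq V_i-1=M_i-1\geq 0,\]
which is the desired inequality.

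I do not expect any real obstacle: the whole argument is bookkeeping that combines three classical statements about the surgery invariants $V_i$ and $H_i$, namely their opposite monotonicity, their mutual symmetry, and the fact that each decreases (resp.\ increases) by at most one unit per step. The only point requiring care is to keep the two directions of monotonicity straight so that the identification $M_i=V_i$ for $i\geq 0$ (and $M_i=H_i$ for $i\leq 0$) is correctly justified.
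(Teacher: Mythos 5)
Your proof is correct and follows essentially the same route as the paper's: both hinge on the opposite monotonicity of $V_i$ and $H_i$, the step-at-most-one bound, and the equality $V_0=H_0$ (equivalently $M_0=V_0=H_0$). The only difference is that you cite the step-one bound as classical and make explicit the reduction $M_i=V_i$ for $i\geq 0$ (and the symmetry reduction via Lemma~\ref{lem:symmetry}), whereas the paper derives the step-one bound directly from the structure of the flip maps $(h_s^+)_*$ and leaves the identification implicit in ``the result follows.''
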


\begin{proof}
	Observe that after restricting to the infinite height towers in $H_*(A_s)$ and $H_*(A_{s+1})$ we have that $(h_s^+)_*=(h_{s+1}^+)_*\circ j:$ where $j$ is either $0$ or multiplication by $U$. It follows that $H_i\leq H_{i+1}\leq H_i+1$. Similarly $V_i\leq V_{i-1}\leq V_i+1$. Since $M_0=V_0=H_0$ the result follows.
\end{proof}

Note that combining Lemma~\ref{lem:changingsizetower} and Lemma~\ref{lem:eventuallyzerotower} we see that $M_i$ is bounded from above. Indeed, Lemma~\ref{lem:symmetry} and the proof of Lemma~\ref{lem:changingsizetower} show that $M_i$ is bounded from above by $M_0$.

We now make some statements that already appeared in \cite[Section 3.3]{ni2014characterizing}. The rational surgery formula expresses the module $\HF^+(S^3_{p/q},s)$ as the mapping cone of the map ${{D_s}:\mathbb{A}_s^+\to \mathbb{B}_s^+}$ we described at the end of Section \ref{sec:surgery}. It follows that there is an exact triangle:
\begin{center}
\begin{tikzcd}
\HF^+(S^3_{p/q},s)\arrow[dr,"q_*"]&&H_*(\mathbb{B}_s^+)\arrow[ll,"i_*"]\\&H_*(\mathbb{A}_s^+)\arrow[ur,"(D_s)_*"]
\end{tikzcd}
\end{center}
where the connecting homomorphism can be identified with  $(D_s)_*$, the map induced on homology by $D_s$, and $i_*$ and $q_*$ are the maps induced by the relevant inclusion and quotient chain  maps.

\begin{lem}\label{lem:plusker}
For a knot $K\subset S^3$ we have that $\HF^+(S^3_{p/q}(K),s)\simeq \ker((D_s)_*)$.
\end{lem}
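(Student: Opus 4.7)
The plan is to replay, in the rational surgery setting, the argument used to prove Lemma~\ref{lem:largestrong}. From the exact triangle displayed just before the statement, it is enough to show that the connecting map $(D_s)_*:H_*(\mathbb{A}_s^+)\to H_*(\mathbb{B}_s^+)$ is surjective: indeed, surjectivity forces $i_*=0$, so by exactness $q_*$ is an isomorphism from $\HF^+(S^3_{p/q}(K),s)$ onto $\ker((D_s)_*)$, which is precisely the claim.

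To produce preimages under $(D_s)_*$, I would mimic the explicit construction from the integer case. Writing $m(t)=\lfloor (s+pt)/q\rfloor$, the map $D_s$ sends $(t,x)\in(t,A^+_{m(t)})$ to $(t,v^+_{m(t)}(x))+(t+1,h^+_{m(t)}(x))$. Given a cycle $b$ lying in a single summand $(t_0,B^+)$ of $\mathbb{B}_s^+$, I would inductively build a preimage $(a_t)_{t\in\Z}$: first choose $a_{t_0}$ so that $v^+_{m(t_0)}(a_{t_0})$ is homologous to $b$; then for $t\geq t_0$ successively pick $a_{t+1}$ so that $v^+_{m(t+1)}(a_{t+1})$ cancels $h^+_{m(t)}(a_t)$, and symmetrically for $t<t_0$.

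The main point to verify is that the resulting sequence $(a_t)$ has finite support, so that it really defines an element of the direct sum $\mathbb{A}_s^+$. This is handled by the asymptotic behaviour of the surgery maps. As $t\to+\infty$ the grading $m(t)\to+\infty$, and eventually $A^+_{m(t)}$ is chain homotopy equivalent to $B^+$ with $v^+_{m(t)}$ an equivalence and $h^+_{m(t)}$ effectively multiplication by $U^{m(t)}$. Each correction step therefore raises by a positive power of $U$ the relevant class in $H_*(B^+)\simeq T^+$, and since every element of $T^+$ is annihilated by a sufficiently high power of $U$, the process must terminate after finitely many steps. A symmetric argument, with the roles of $v^+$ and $h^+$ exchanged, handles the tail $t\to-\infty$. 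The hard part is being careful with the asymptotic identifications of $v^+_m$ and $h^+_m$ for $|m|\gg 0$, but this is routine given the standard stabilization of the surgery complexes. The resulting finitely supported preimage shows that $(D_s)_*$ is surjective, from which the lemma follows.
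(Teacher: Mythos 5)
Your proposal takes the same route the paper does: the paper's own proof of Lemma~\ref{lem:plusker} simply asserts that $(D_s)_*$ is surjective ``by construction,'' deferring implicitly to the preimage construction it spelled out for the integer case in Lemma~\ref{lem:largestrong}. You correctly reduce the lemma to surjectivity of $(D_s)_*$ via the exact triangle, and you reproduce that preimage-construction argument with the appropriate modification $m(t)=\lfloor(s+pt)/q\rfloor$. The finiteness-of-support argument you sketch -- using that $H_{m(t)}\to\infty$ as $t\to+\infty$, so $(h^+_{m(t)})_*$ is eventually a high power of $U$ and kills everything in $T^+$ -- is the right reason the forward propagation terminates; this is exactly what Lemma~\ref{lem:eventuallyinfinitytower} in the paper records.

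There is one point you should be careful about: the left tail. In the paper's integer-case construction the sequence is set to zero for indices below the starting one, i.e.\ ``$a_s=0$ otherwise.'' Your phrase ``and symmetrically for $t<t_0$,'' together with the later sentence about a symmetric argument handling the tail $t\to-\infty$, suggests you intend to propagate nontrivially to the left, choosing $a_{t_0-1}$ so that $h^+_{m(t_0-1)}(a_{t_0-1})$ cancels $v^+_{m(t_0)}(a_{t_0})$. But that choice would make the component of $D_s(a)$ in $(t_0,B^+)$ equal to $v^+_{m(t_0)}(a_{t_0})+h^+_{m(t_0-1)}(a_{t_0-1})=b-b=0$, rather than $b$. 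The correct move is to set $a_t=0$ for all $t<t_0$: then $h^+_{m(t_0-1)}(a_{t_0-1})=0$, the $t_0$ component of $D_s(a)$ is exactly $b$, and the components for $t<t_0$ vanish automatically, so there is nothing to check as $t\to-\infty$. With that correction your argument is correct and matches the paper's.
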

\begin{proof}
It follows from the construction of $D_s$ that $(D_s)_*$ is surjective. Thus $i_*$ is trivial and $\HF^+(S^3_{p/q},s)\simeq \ker((D_s)_*)$.
\end{proof}

 Set 
\[H_{\text{red},*}(\mathbb{A}_s^+):=\underset{i\in\Z}{\bigoplus} (H_{\text{red},*}(A^+_{\left\lfloor \frac{s+pi}{q}\right\rfloor}))\]
and let $\ker((D_s)_{*,T})$ denote the kernel of the restriction of $(D_s)_{*}$ to the infinite height towers of $H_*(\mathbb{A}^+_s)$.

As in the case of the proof of Lemma~\ref{lem:largestrong}, note that for any $s,p,i,q$, an element $x\in H_*(A_{\lfloor\frac{s+ip}{q}\rfloor})$ can be extended canonically to an element of $\tilde{x}\in\ker((D_s)_*)$, where the components of $\tilde{x}$ are in the towers of $H_*(A_{\lfloor\frac{s+jp}{q}\rfloor})$. This relies on the fact that for any $n\in\Z^{\geq 0}$ there is a sufficiently large positive $s$ $h_s^+$ kills the bottom $n$ generators of $A_s^+$ and $v_{-s}^+$ kills the bottom $n$ generators of $A^+_{-s}$ -- see Lemma~\ref{lem:eventuallyinfinitytower}, for instance.

\begin{lem}\label{lem:decomposingsmall}
$\HF^+(S^3_{p/q},s)\simeq H_{\text{red},*}(\mathbb{A}_{s}^+)\oplus \ker((D_s)_{*,T})$.
\end{lem}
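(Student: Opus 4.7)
The plan is to apply Lemma~\ref{lem:plusker} to identify $\HF^+(S^3_{p/q}(K),s)$ with $\ker((D_s)_*)$, and then to exhibit an internal $\F[U]$-direct-sum decomposition of this kernel. Writing $H_*(\mathbb{A}_s^+)=\mathbb{T}\oplus H_{\text{red},*}(\mathbb{A}_s^+)$, where $\mathbb{T}$ is the sum of the infinite-height towers, gives the natural inclusion $\ker((D_s)_{*,T})\hookrightarrow \ker((D_s)_*)$ for free; what remains is to produce an $\F[U]$-linear section $\phi\colon H_{\text{red},*}(\mathbb{A}_s^+)\to\ker((D_s)_*)$ of the projection onto the reduced summand.

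The map $\phi$ is built from the canonical extension procedure described in the paragraph immediately preceding the lemma. Concretely, given $r\in H_{\text{red},*}(A^+_{\lfloor (s+pi)/q\rfloor})$ at index $i$, define $\phi(r)$ to have $r$ in its $i$-th slot (with zero tower component there) and to have tower-valued components at the remaining indices $j\neq i$ determined recursively by the cycle conditions $v^+_{m_j}(\phi(r)_j)+h^+_{m_{j-1}}(\phi(r)_{j-1})=0$, moving outward from $i$ in both directions. At each finite step the required preimage in the tower is unique: multiplication by any power of $U$ on $T^+$ is surjective, and since $T^+$ contains a single element at each admissible grading, there is exactly one tower element of the correct grading realizing the equation. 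By Lemma~\ref{lem:eventuallyinfinitytower}, $V_{\lfloor (s+pj)/q\rfloor}\to\infty$ as $j\to-\infty$ and $H_{\lfloor (s+pj)/q\rfloor}\to\infty$ as $j\to\infty$, so eventually $v^+$ or $h^+$ annihilates any element of bounded grading; this forces the tower components of $\phi(r)$ to vanish for $|j|$ large, giving a well-defined finitely supported element of $\ker((D_s)_*)$.

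The splitting itself is then straightforward. For any element of $\ker((D_s)_*)$, written as $(t,r)$ with $t\in\mathbb{T}$ and $r\in H_{\text{red},*}(\mathbb{A}_s^+)$, subtracting $\phi(r)$ yields an element with vanishing reduced component, hence an element of $\ker((D_s)_{*,T})$. Meanwhile $\phi(H_{\text{red},*}(\mathbb{A}_s^+))\cap \ker((D_s)_{*,T})=0$, since the reduced component of $\phi(r)$ at index $i$ equals $r$ itself. The main obstacle is verifying that $\phi$ is genuinely $\F[U]$-linear rather than only $\F$-linear: this follows from the grading-uniqueness of the canonical preimages combined with the fact that multiplication by $U$ on $T^+$ simply shifts each element to the unique element one step lower in grading, so $U\phi(r)$ and $\phi(Ur)$ satisfy the same recursion with the same initial data and therefore agree componentwise.
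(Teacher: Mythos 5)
Your argument takes a genuinely different route from the paper's. The paper's proof is essentially one sentence: it invokes the structural fact that the maps $v^+_m, h^+_m$ (and hence $(D_s)_*$) vanish on $H_{\text{red},*}(A^+_m)$, so $\ker((D_s)_*)$ contains $H_{\text{red},*}(\mathbb{A}_s^+)$ on the nose and the splitting $\ker((D_s)_*) = H_{\text{red},*}(\mathbb{A}_s^+) \oplus \ker((D_s)_{*,T})$ is immediate. (This vanishing is used again in the proof of Lemma~\ref{lem:largestrong}.) You instead try to build an explicit $\F[U]$-linear section $\phi$ of the projection to the reduced summand via the recursive extension procedure, without ever invoking the vanishing of $(D_s)_*$ on the reduced part. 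That is more machinery than the paper needs; if the vanishing is granted, your recursion produces only zero tower components and $\phi$ is just the inclusion of $H_{\text{red},*}$ into the kernel.

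There is a concrete gap in the $\F[U]$-linearity claim as written. The ``canonical preimage'' rule (take the unique nonzero preimage in the correct grading when the right-hand side is nonzero, take $0$ when it is $0$) is not automatically $U$-equivariant. If at some index $j$ one has $v^+_{m_j}(\phi(r)_j) = b$ with $b$ the bottom element of $T^+$ and $V_{m_j}\geq 1$, then $\phi(r)_j$ sits $V_{m_j}$ steps above the bottom of the tower in $H_*(A^+_{m_j})$, so $U\phi(r)_j \neq 0$; meanwhile the recursion for $Ur$ has right-hand side $Ub = 0$ at index $j$, forcing $\phi(Ur)_j = 0$. So $\phi(Ur)\neq U\phi(r)$ in general, and the sentence ``$U\phi(r)$ and $\phi(Ur)$ satisfy the same recursion with the same initial data and therefore agree componentwise'' does not hold; the recursion has the same form but the rule for resolving the preimage breaks equivariance at the moment the right-hand side dies. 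A related soft spot is the finiteness claim: the gradings of the $\phi(r)_j$ drift as $|j|$ grows, so ``eventually $v^+$ or $h^+$ annihilates any element of bounded grading'' does not directly apply to the $\phi(r)_j$ themselves; one has to track the relative height $n_j$ in the tower and use the recursion $n_{j+1} = n_j - H_{m_j} + V_{m_{j+1}}$ together with Lemmas~\ref{lem:eventuallyzerotower} and~\ref{lem:eventuallyinfinitytower} to see that $n_j$ eventually goes negative in both directions.

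Both defects can be repaired without knowing the vanishing fact: your extension argument does establish that the projection $\ker((D_s)_*) \to H_{\text{red},*}(\mathbb{A}_s^+)$ is surjective (one only needs an $\F$-linear lift for that), and the resulting short exact sequence
\begin{equation*}
0 \to \ker((D_s)_{*,T}) \to \ker((D_s)_*) \to H_{\text{red},*}(\mathbb{A}_s^+) \to 0
\end{equation*}
splits automatically because $\ker((D_s)_{*,T})$ is a direct sum of copies of $T^+$, which is an injective $\F[U]$-module. This bypasses the need to build an $\F[U]$-linear $\phi$ by hand. But the cleanest fix is simply to use the vanishing of $v^+_m, h^+_m$ on the reduced part, as the paper does, at which point the lemma is a one-line observation.
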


\begin{proof}
The module $H_*(\mathbb{A}_s^+)$ can be expressed as the direct sum of $H_{\text{red},*}(\mathbb{A}_{s}^+)$ and a ``free" part consisting only of $T^+$-summands, one for each $A^+_{\left\lfloor \frac{s+pi}{q}\right\rfloor}$-summand. Furthermore, the  map $(D_s)_*:H_*(\mathbb{A}_s^+) \to H_*(\mathbb{B}_s^+)$ maps $H_{\text{red},*}(\mathbb{A}_{s}^+)$ to zero. 
\end{proof}

For each $s$, pick an $i_s\in\Z$ with $M_{i_s}=\underset{i\in\Z}{\max}\{M_{\left\lfloor\frac{s+ip}{q}\right\rfloor}\}$.
\begin{cor}\label{cor:reddecomp}
    $\HF^+_{\text{red}}(S^3_{p/q},s) \simeq H_{\text{red},*}(\mathbb{A}_{s}^+)\oplus \underset{i\neq i_s\in\Z}{\bigoplus}T^+_{M_{\left\lfloor \frac{s+ip}{q}\right\rfloor}}$.
\end{cor}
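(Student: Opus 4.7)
The plan is to start from Lemma~\ref{lem:decomposingsmall}, which asserts that $\HF^+(S^3_{p/q},s) \simeq H_{\text{red},*}(\mathbb{A}^+_s) \oplus \ker((D_s)_{*,T})$. Since $H_{\text{red},*}(\mathbb{A}^+_s)$ is already $U$-torsion, restricting both sides to their reduced parts yields
$$\HF^+_{\text{red}}(S^3_{p/q},s) \simeq H_{\text{red},*}(\mathbb{A}^+_s) \oplus \left(\ker((D_s)_{*,T})\right)_{\text{red}}.$$
The corollary thus reduces to identifying the reduced part of $\ker((D_s)_{*,T})$ with $\bigoplus_{i \neq i_s} T^+_{M_{\lfloor(s+ip)/q\rfloor}}$.

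To carry out this identification I would describe $(D_s)_{*,T}$ explicitly. Writing $m_i := \lfloor(s+ip)/q\rfloor$, the tower part of each summand $H_*(A^+_{m_i})$ is a single $T^+$, and by construction $(v^+_{m_i})_*$ and $(h^+_{m_i})_*$ act on these towers as multiplication by $U^{V_{m_i}}$ and $U^{H_{m_i}}$ respectively. Thus a finitely-supported kernel element $(a_i)_i \in \bigoplus_i T^+$ satisfies the recursive system $U^{V_{m_i}} a_i = U^{H_{m_{i-1}}} a_{i-1}$ for every $i$, and Lemmas~\ref{lem:eventuallyzerotower} and~\ref{lem:eventuallyinfinitytower} confine any such $(a_i)$ to a finite range of indices, so the problem becomes a finite-dimensional linear algebra exercise. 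For each $i \neq i_s$ a direct construction then produces a kernel element supported on two consecutive indices near $i$ whose annihilator in $\F[U]$ is precisely $U^{M_{m_i}}$, contributing a $T^+_{M_{m_i}}$ summand. The remaining infinite tower $T^+$ ``escapes'' at the exceptional index $i_s$ where $M_{m_i}$ is maximal, matching the unique tower summand of $\HF^+(S^3_{p/q},s)$ predicted by the rank formula $\text{rk}_{\F[U]}\HF^-(Y) = |H_1(Y;\Z)|$.

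The main obstacle is rigorously verifying that each torsion summand has annihilator exactly $U^{M_{m_i}}$ rather than $U^{V_{m_i}}$ or $U^{H_{m_i}}$ individually. The point is that when propagating a kernel element across adjacent indices one has a choice between two relations, governed by $V_{m_i}$ and $H_{m_i}$, and the shorter of the two, i.e.~the minimum $M_{m_i}$, dictates the order of the corresponding torsion summand. Lemma~\ref{lem:changingsizetower} guarantees that adjacent $M_{m_i}$'s differ by at most one, which makes the propagation consistent and accounts for exactly one $T^+_{M_{m_i}}$ per index $i \neq i_s$, with no residual contributions.
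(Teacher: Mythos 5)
The proposal begins exactly where the paper does, from Lemma~\ref{lem:decomposingsmall}, and reduces the claim to identifying $\ker((D_s)_{*,T})$; but from there it takes the opposite tack from the paper. The paper pins down the \emph{tower} of $\ker((D_s)_{*,T})$, by observing that the sum $x_a$ of all tower generators in a sufficiently high Maslov grading $a$ lies in $\ker((D_s)_*)$, concluding that the bottom of the tower is the $T^+_{M_{m_{i_0}}}$ at the index $i_0$ where $M_{m_i}$ is maximal. You instead try to pin down the \emph{torsion} directly, by constructing kernel elements of small support at each index $i\neq i_s$. The two routes are complementary in spirit, and your instinct that each $i\neq i_s$ should contribute a $T^+_{M_{m_i}}$ from an element concentrated near index $i$ is correct; in fact single-index elements already do the job, since a component $a_i$ lying in $\ker U^{\min(V_{m_i},H_{m_i})}$ already satisfies both boundary conditions $(v^+_{m_i})_*(a_i)=0$ and $(h^+_{m_i})_*(a_i)=0$, so the detour through two-index chains is an unnecessary complication.

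There is, however, a genuine gap. Producing, for each $i\neq i_s$, a kernel element with annihilator $(U^{M_{m_i}})$ does not by itself establish the direct-sum decomposition $\ker((D_s)_{*,T})\simeq T^+\oplus\bigoplus_{i\neq i_s}T^+_{M_{m_i}}$: one must show these elements together with the tower actually span, and that they form a \emph{complementary summand} to the tower (in particular, that they do not lie in $U^k\cdot\ker((D_s)_{*,T})$ for $k\geq 1$ except in the obvious way, and that the tower intersects the span of single-index elements exactly in $T^+_{M_{m_{i_s}}}$). Your proposal acknowledges a related worry (``rigorously verifying that each torsion summand has annihilator exactly $U^{M_{m_i}}$'') but does not supply the splitting argument. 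Relatedly, the statement that the infinite tower ``escapes at the exceptional index $i_s$ where $M_{m_i}$ is maximal'' is precisely the nontrivial content the paper proves via the Maslov-grading argument; in your write-up it is asserted rather than derived, and the rank formula $\operatorname{rk}_{\F[U]}\HF^-(Y)=|H_1(Y;\Z)|$ only tells you there is one tower, not which $T^+_{M_{m_i}}$ it absorbs. So while the approach is close in spirit to the paper's, the proposal as written leaves the key structural point --- the identification of $i_s$ and the splitting of $\ker((D_s)_{*,T})$ --- unproven.
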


\begin{proof} Recall from Lemma~\ref{lem:eventuallyzerotower} that $M_i=0$ for all but finitely many $i\in\Z$.
    Given the previous lemma, it suffices to determine the classes which are in the infinite height tower in $\ker((D_s)_{*,T})$ . Observe that in an arbitrary Maslov grading there are only finitely many generators of $H_*(\mathbb{A}_s^+)$. For any sufficiently large Maslov grading $a$ in which $H_*(\mathbb{A}_s^+)$, the sum of generators of Maslov grading $a$ is in $\ker((D_s)_*)$. Let $x_a$ denote the sum of generators of Maslov grading $a$. It follows that $x_a$ is in the tower of $\ker((D_s)_*)$ for sufficiently large $a$. It follows that the bottom of the tower coincides with one of the towers $T_{M_{\left\lfloor\frac{s+i_0p}{q}\right\rfloor}}$ where $i_0$ is picked so that $M_{\left\lfloor\frac{s+i_0p}{q}\right\rfloor}=\underset{i\in\Z}{\max}\left\{M_{\left\lfloor\frac{s+ip}{q}\right\rfloor}\right\}$ or equivalently -- given Lemma~\ref{lem:changingsizetower} -- any $i_0$ which minimizes $\left|\left\lfloor\frac{s+i_0p}{q}\right\rfloor\right|$.
\end{proof}

We now show that the two summands of $$\HF^+_{\text{red}}(S^3_{p/q})\simeq\left( \underset{0\leq s< p}{\bigoplus} H_{\text{red},*}(\mathbb{A}_{s}^+)\right)\oplus \left(\underset{0\leq s<p}{\bigoplus}\left(\underset{i\neq i_s\in\Z}{\bigoplus}T^+_{M_{\left\lfloor \frac{s+ip}{q}\right\rfloor}}\right)\right)$$ in Corollary~\ref{cor:reddecomp} satisfy the strong geography restriction. We begin with a preparatory Lemma to treat the first summand:

\begin{lem}\label{lem:AnsummandtoHFsummands} Suppose that $H_*(A_n)$ contains $m$ $T^+_k$ summands. Then $H_{\text{red},*}(\mathbb{A}_{s}^+)$ contains at least $m\left|\left\{i:\left\lfloor\dfrac{s+pi}{q}=n\right\rfloor\right\}\right|$ many summands of the form $T^+_k$ .
\end{lem}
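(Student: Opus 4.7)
The plan is to read off the conclusion directly from the definition of $H_{\text{red},*}(\mathbb{A}_s^+)$ once we identify how many of its summands coincide with $H_{\text{red},*}(A_n^+)$. Recall that
\[H_{\text{red},*}(\mathbb{A}_s^+) \;=\; \bigoplus_{i\in\Z} H_{\text{red},*}\!\left(A^+_{\left\lfloor \frac{s+pi}{q}\right\rfloor}\right).\]
Thus the plan is first to isolate, inside this direct sum, the subsum of those indices $i\in\Z$ for which $\left\lfloor \tfrac{s+pi}{q}\right\rfloor = n$. Each such index contributes a copy of $H_{\text{red},*}(A_n^+)$, and there are exactly $\left|\{i:\lfloor(s+pi)/q\rfloor=n\}\right|$ of them. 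Hence the number of $T^+_k$ summands inside $H_{\text{red},*}(\mathbb{A}_s^+)$ is bounded below by this cardinality times the number of $T^+_k$ summands of $H_{\text{red},*}(A_n^+)$.

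The second step is to check that if $H_*(A_n^+)$ contains $m$ summands of the form $T^+_k$ (with $k$ finite), then $H_{\text{red},*}(A_n^+)$ also contains those same $m$ summands. This is just the definition of the reduced part: since $A_n^+$ is of $\HF^+$-type, we have $H_*(A_n^+) \simeq T^+ \oplus H_{\text{red},*}(A_n^+)$, and the decomposition into elementary $\F[U]$-modules strips off only the infinite tower $T^+$. All finite-length summands $T^+_k$ are therefore retained in the reduced part.

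Combining these two observations gives the required lower bound of $m\left|\{i:\lfloor(s+pi)/q\rfloor=n\}\right|$ summands of the form $T^+_k$ in $H_{\text{red},*}(\mathbb{A}_s^+)$. There is no real obstacle here — the statement is essentially a bookkeeping lemma, preparing us to transfer multiplicity information from the individual $A_n^+$ complexes up to the sum $\mathbb{A}_s^+$ which enters the mapping cone surgery formula. Its only subtlety is the (harmless) use of ``at least'' rather than equality, which accounts for the fact that further $T^+_k$ summands may arise in $H_{\text{red},*}(\mathbb{A}_s^+)$ from indices $i$ with $\lfloor(s+pi)/q\rfloor \neq n$.
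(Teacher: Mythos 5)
Your proof is correct and takes essentially the same approach as the paper: it is a direct bookkeeping argument from the definition of $H_{\text{red},*}(\mathbb{A}_s^+)$ as the direct sum $\bigoplus_{i\in\Z} H_{\text{red},*}(A^+_{\lfloor(s+pi)/q\rfloor})$, with the observation that the reduced part of $H_*(A_n^+)$ retains all finite-length $T^+_k$ summands. The paper's own proof is a one-liner citing Lemma~\ref{lem:decomposingsmall} for the splitting of $H_*(\mathbb{A}_s^+)$ into free and reduced parts, and your argument makes explicit exactly the accounting that proof leaves implicit.
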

\begin{proof}

 It follows from Lemma~\ref{lem:decomposingsmall} that if $H_{\text{red},*}(\mathbb{A}^+_s)$ contains $m$ $T^+_k$ summand then for each $i$ with $\left\lfloor\dfrac{s+pi}{q}\right\rfloor=n$ we can find $m$ distinct $T^+_k$ summands in $H_*(\mathbb{A}_s^+)$. The result follows.
\end{proof}

Note that  $\left|\left\{i:\left\lfloor\dfrac{s+pi}{q}=n\right\rfloor\right\}\right|\neq 0$ for some $s$ and consequently that Lemma \ref{lem:AnsummandtoHFsummands} always imply the existence of at least one $T^+_k$ summand.

\begin{cor}\label{cor:torsionsummand}
   $ \underset{0\leq s< p}{\bigoplus}H_{\text{red},*}(\mathbb{A}_{s}^+)$ satisfies the strong geography restriction.
\end{cor}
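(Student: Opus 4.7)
The plan is to reduce the claim to Corollary~\ref{cor:largestrong} via a reindexing argument. First I would observe that as $s$ ranges over $\{0,1,\dots,p-1\}$ and $i$ ranges over $\Z$, the quantity $s+pi$ traverses $\Z$ bijectively; consequently, every integer $n$ arises as $\lfloor(s+pi)/q\rfloor$ for exactly $q$ such pairs $(s,i)$, since the interval $[nq,(n+1)q)$ contains exactly $q$ integers. Collecting terms in the definition of $H_{\text{red},*}(\mathbb{A}^+_s)$, this yields an identification
\[\bigoplus_{0\leq s<p} H_{\text{red},*}(\mathbb{A}^+_s) \;\simeq\; \bigoplus_{n\in\Z} H_{\text{red},*}(A^+_n)^{\oplus q}.\]

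Next I would pick any integer $N\geq 2g(K)-1$. For $|n|\geq g(K)$ the complex $A^+_n$ is chain homotopy equivalent to $\CF^+(S^3)$, so $H_{\text{red},*}(A^+_n)=0$ outside a finite window of indices, and the large surgery formula reviewed before Lemma~\ref{lem:largesimilar} gives
\[\bigoplus_{n\in\Z} H_{\text{red},*}(A^+_n) \;=\; \HF^+_{\text{red}}(S^3_N(K)).\]
By Corollary~\ref{cor:largestrong}, $\HF^+(S^3_N(K))$ satisfies the strong geography restriction. Since its tower part contributes nothing to torsion, this means $\HF^+_{\text{red}}(S^3_N(K))$ contains a direct summand $T^+_\ell\oplus T^+_{\ell-1}\oplus\dots\oplus T^+_1$, where $\ell=\min\{k\geq 0 : U^k\cdot\HF^+_{\text{red}}(S^3_N(K))=0\}$.

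Finally, I would observe that replacing a module by $q$ copies of itself neither changes the minimal exponent $\ell$ that annihilates it, nor destroys the descending chain of summands (it merely multiplies the multiplicity of each $T^+_k$ by $q$). Hence $\bigoplus_{0\leq s<p} H_{\text{red},*}(\mathbb{A}^+_s)$ contains the desired summand $T^+_\ell\oplus\dots\oplus T^+_1$, which is precisely the strong geography restriction. The argument is essentially bookkeeping; the only step requiring genuine verification is the counting assertion that each floor value is hit exactly $q$ times, and once this is in hand the result follows directly from Corollary~\ref{cor:largestrong}.
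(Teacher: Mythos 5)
Your proof is correct and rests on the same underlying mechanism as the paper's argument, namely the similar-skylines intermediate value principle already established in Lemma~\ref{lem:largesimilar} and Corollary~\ref{cor:largestrong}; what differs is the bookkeeping. Where the paper argues directly: locate the largest finite tower $T^+_k$ in some $H_{\text{red},*}(A^+_t)$, propagate the intermediate heights $T^+_j$ ($1\leq j\leq k$) along the sequence $H_{\text{red},*}(A^+_t)$ via similar skylines, and then invoke Lemma~\ref{lem:AnsummandtoHFsummands} to lift each such tower into $\bigoplus_{s} H_{\text{red},*}(\mathbb{A}^+_s)$ -- you instead observe the cleaner structural fact, via the division algorithm ($m=s+pi$ with $0\leq s<p$, $i\in\Z$, is a bijection $\{0,\dots,p-1\}\times\Z\to\Z$, and $\lfloor m/q\rfloor=n$ for exactly the $q$ integers $m\in[nq,(n+1)q)$), that the whole module is isomorphic to $q$ copies of $\HF^+_{\text{red}}(S^3_N(K))$ for any integer $N\geq 2g(K)-1$, at which point Corollary~\ref{cor:largestrong} applies verbatim. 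Your route makes the multiplicity-$q$ structure explicit (the paper only needs multiplicity $\geq 1$, so it uses Lemma~\ref{lem:AnsummandtoHFsummands} instead of the full reindexing), avoids re-running the skyline argument, and arguably clarifies why the hypothesis of rational surgery is exactly what is needed. Both arguments are correct and quite close in substance.
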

\begin{proof}
    Consider the largest finite height tower in $ H_{\text{red},*}(\mathbb{A}_{s}^+)$,
 $T_k^+$. $T_k^+$ is supported in $ H_{\text{red},*}(A_{s}^+)$ for some $s$. Observe that for sufficiently large $s'$, $ H_{\text{red},*}(A_{s'}^+)$ has no finite height towers. Observe too that $ H_{\text{red},*}(A_{t}^+)$ and $ H_{\text{red},*}(A_{t+1}^+)$ have similar skylines for all $t$, as in the proof of Lemma~\ref{lem:largesimilar}. It follows that for all $1\leq j\leq k$ there exists some $t$ such that $H_{\text{red},*}(A_t)$ contains a $T_j^+$ summand. The result now follows by an application of Lemma~\ref{lem:AnsummandtoHFsummands}, noting that for all $t$, $H_{\text{red},*}(A_t)$ occurs as a summand of $\underset{0\leq s< p}{\bigoplus}H_{\text{red},*}(\mathbb{A}_{s}^+)$.
\end{proof}

We now study $\underset{0\leq s< p}{\bigoplus}\left(\underset{i\neq i_s\in \Z}{\bigoplus}T^+_{M_{\left\lfloor\frac{s+ip}{q}\right\rfloor}}\right)$.  The key property we use in the proof is that removing a subset of tallest finite height towers from an $\F[U]$-module which satisfies the strong geography restriction (if they exist) yields a new $\F[U]$-module which still satisfies the strong geography restriction.
\begin{lem}\label{lem:towersummand}
      $\underset{0\leq s< p}{\bigoplus}\left(\underset{i\neq i_s\in \Z}{\bigoplus}T^+_{M_{\left\lfloor\frac{s+ip}{q}\right\rfloor}}\right)$ satisfies the strong geography restriction.
\end{lem}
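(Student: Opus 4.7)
The plan is to rewrite the module via the bijection $(s,i)\mapsto s+ip$ from $\{0,\ldots,p-1\}\times\Z$ to $\Z$. Under this identification the summand indexed by $(s,i)$ becomes $T^+_{M_{\lfloor m/q\rfloor}}$ with $m=s+ip$, so the sum \emph{before} excluding the $i_s$-terms is $N:=\bigoplus_{m\in\Z}T^+_{M_{\lfloor m/q\rfloor}}\simeq\bigoplus_{n\in\Z}(T^+_{M_n})^{\oplus q}$. The $p$ removed summands will correspond to a set $T\subset\Z$ of size $p$ containing one representative per residue class mod $p$: the representative of class $s$ is $s+i_sp$, which by the definition of $i_s$ minimises $|\lfloor\,\cdot\,/q\rfloor|$ in the class.

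The heart of the argument will be a geometric analysis of $T$. Let $r^*$ be the largest integer with $q(2r^*+1)\le p$, and set $B_r:=\{m\in\Z:|\lfloor m/q\rfloor|\le r\}=[-qr,\,qr+q-1]$, an interval of $q(2r+1)$ consecutive integers. My first claim will be $B_{r^*}\subseteq T$: for $m\in B_{r^*}$ and $k\in\Z\setminus\{0\}$, a direct estimate using $p\ge q(2r^*+1)$ shows $|\lfloor(m+kp)/q\rfloor|\ge r^*+1$, so $m$ is the unique $|\lfloor\,\cdot\,/q\rfloor|$-minimiser in its residue class. Hence every slot with $|n|\le r^*$ has all $q$ copies of $T^+_{M_n}$ removed. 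Conversely, maximality of $r^*$ gives $q(2r^*+3)>p$, so $B_{r^*+1}$ already covers every residue class mod $p$; therefore $T\subseteq B_{r^*+1}$ and every slot with $|n|\ge r^*+2$ is untouched, contributing its full $(T^+_{M_n})^{\oplus q}$ to the remaining module. Finally, $|T\setminus B_{r^*}|=p-q(2r^*+1)<2q$, strictly less than the $2q$ integers making up the two blocks for $n=\pm(r^*+1)$; hence at least one of these slots is not fully wiped and contributes at least one $T^+_{M_{r^*+1}}$.

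Putting these observations together, the remaining module will contain $(T^+_{M_n})^{\oplus q}$ for every $|n|\ge r^*+2$, plus at least one $T^+_{M_{r^*+1}}$. Lemma~\ref{lem:changingsizetower} then forces the sequence $(M_n)_{n\ge r^*+1}$ to drop from $M_{r^*+1}$ to $0$ in unit decrements, so every integer $k\in\{1,\ldots,M_{r^*+1}\}$ arises as $M_n$ for some $n\ge r^*+1$ whose slot survives. The remaining module therefore contains a $T^+_k$ summand for each $k\in\{1,\ldots,M_{r^*+1}\}$; since its tallest finite-height tower is $T^+_{M_{r^*+1}}$, the strong geography restriction holds. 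The degenerate case $p<q$ (so $r^*=-1$) follows the same recipe: no slot is fully wiped, the $n=0$ slot loses exactly $p<q$ copies, all other slots are untouched, and the step-wise property again supplies every required $T^+_k$.

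The hard part will be the geometric claim $B_{r^*}\subseteq T$ together with the annular count that locates a surviving slot at $|n|=r^*+1$; both require care with the tie-breaking in the choice of $i_s$ (any valid choice should work, but one must verify the resulting $T$ still satisfies the inclusion) and with small-parameter edge cases such as $p$ close to $q$ or $M_{r^*+1}=0$.
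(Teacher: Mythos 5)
The proposal is correct and follows the same underlying strategy as the paper's proof: both exploit the facts that $M_n$ is a non-increasing step function of $|n|$ with unit steps (Lemmas~\ref{lem:changingsizetower},~\ref{lem:symmetry},~\ref{lem:eventuallyzerotower}) and that the removed towers concentrate at small $|n|$. The packaging is, however, genuinely different and arguably cleaner. You pass to the full index set $\Z$ via $(s,i)\mapsto s+ip$, identify the untruncated sum with $\bigoplus_n (T^+_{M_n})^{\oplus q}$, and reduce the lemma to a transparent counting statement about the removed set $T$: the nested inclusions $B_{r^*}\subseteq T\subseteq B_{r^*+1}$ plus $|T\setminus B_{r^*}|=p-q(2r^*+1)<2q$ immediately locate a surviving tower at $|n|=r^*+1$ and show that every slot with $|n|\geq r^*+2$ is untouched. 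The paper instead works with the inequality~\eqref{eq:1} and splits into cases according to whether there is a unique $i$ satisfying it, requiring a somewhat delicate analysis of the possible values $\lfloor(s+(i_0+k)p)/q\rfloor$ when there are multiple solutions; your box-counting avoids that case analysis entirely.

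The one genuine gap you flag yourself is worth closing explicitly: $i_s$ is defined to maximize $M_{\lfloor(s+ip)/q\rfloor}$, not to minimize $\bigl|\lfloor(s+ip)/q\rfloor\bigr|$, and when $M$ is locally constant these disagree, so $B_{r^*}\subseteq T$ is not automatic. The fix is that the multiset of removed heights $\bigl\{M_{\lfloor(s+i_sp)/q\rfloor}\bigr\}_{0\leq s<p}$ is choice-independent (each entry is simply the maximum of $M_{\lfloor(s+ip)/q\rfloor}$ over $i$), so the isomorphism type of the resulting module does not depend on the tie-break; one may therefore adopt the $|\cdot|$-minimizing convention without loss of generality, after which $B_{r^*}\subseteq T$ follows from your direct estimate. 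The paper implicitly makes the same reduction, so this is a shared (and easily repaired) omission rather than a defect of your approach.
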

\begin{proof}
Observe that $s$ takes every value in the range $0\leq s\leq p-1$. It follows that $\frac{s+ip}{q}$ takes the value of every fraction with denominator $q$ as $0\leq s\leq p-1$ and $i\in \Z$. Thus $\underset{0\leq s< p}{\bigoplus}\left(\underset{i\in \Z}{\bigoplus}T^+_{M_{\left\lfloor\frac{s+ip}{q}\right\rfloor}}\right)$ satisfies the strong geography restriction, since for $k\geq 0$ $M_K\geq M_{k+1}\geq M_{K}-1$   and $\lim_{|k|\to\infty}M_k=0$ -- see Lemma~\ref{lem:eventuallyzerotower} and Lemma~\ref{lem:changingsizetower}. However, we are considering the submodule where the $i$ values are subject to the additional restraint that $i\neq i_s$, so we require some  additional analysis.

Fix $s\in\{0,1,\dots p-1\}$. Without loss of generality we may take $\frac{p}{q}>0$ -- since $S^3_{\frac{p}{q}}(K)\simeq S^3_{-\frac{p}{q}}(-K)$. Observe that for each $s$ there is a unique $i$ for which \begin{align*}
       -\dfrac{p}{2q}\leq \dfrac{s+ip}{q}< \dfrac{p}{2q}
   \end{align*}

 It follows that there exists at least one $i$ for which 
 \begin{align}
       -\left\lfloor\dfrac{p}{2q}\right\rfloor-1\leq \left\lfloor\dfrac{s+ip}{q}\right\rfloor\leq  \left\lfloor\dfrac{p}{2q}\right\rfloor
   \label{eq:1}\end{align}
Observe then that $T_{M_n}^+$ appears in $\underset{0\leq s< p}{\bigoplus}\left(\underset{i\neq i_s\in \Z}{\bigoplus}T^+_{M_{\left\lfloor\frac{s+ip}{q}\right\rfloor}}\right)$ for all $n\geq \left\lfloor\dfrac{p}{2q}\right\rfloor$ and  $n\leq -\left\lfloor\dfrac{p}{2q}\right\rfloor-1$. Indeed, if for all $s$ there is a unique $i$ which satisfies Equation~\ref{eq:1}, then that $i$ is $i_s$ and 
$\underset{0\leq s< p}{\bigoplus}\left(\underset{i\neq i_s\in \Z}{\bigoplus}T^+_{M_{\left\lfloor\frac{s+ip}{q}\right\rfloor}}\right)$ satisfies the strong geography restriction.

We now consider the case in which there exists more than one $i$ value satisfying Equation~\ref{eq:1}. Without loss of generality we can take $i_0$  to be the smallest solution to Equation~\ref{eq:1}. Observe that the set of solutions to Equation~\ref{eq:1} are then given by $\{i_0+k:k\in\Z,0\leq k\leq K\}$ for some fixed $K>0$. We seek to determine $i_0$. Observe that if $i_0$ satisfies Equation~\ref{eq:1} and $\left\lfloor\dfrac{s+i_0p}{q}\right\rfloor>-\left\lfloor\dfrac{p}{2q}\right\rfloor$ then for all $0<k\in\Z$ we have that

\begin{align*}
\left\lfloor\dfrac{s+(i_0+k)p}{q}\right\rfloor&\geq \left\lfloor\dfrac{s+i_0p}{q}\right\rfloor+\left\lfloor\dfrac{kp}{q}\right\rfloor\\&>-\left\lfloor\dfrac{p}{2q}\right\rfloor+\left\lfloor\dfrac{kp}{q}\right\rfloor\\&\geq \left\lfloor\dfrac{p}{2q}\right\rfloor+\left\lfloor\dfrac{(k-1)p}{q}\right\rfloor\\&\geq\left\lfloor\dfrac{p}{2q}\right\rfloor \end{align*}

Contradicting the assumption that $i_0$ is not a unique solution to Equation~\ref{eq:1}. It follows that $\left\lfloor\dfrac{s+i_0p}{q}\right\rfloor\in\left\{-\left\lfloor\dfrac{p}{2q}\right\rfloor,  -1-\left\lfloor\dfrac{p}{2q}\right\rfloor \right\}$. Indeed, we see that if $\left\lfloor\dfrac{s+i_0p}{q}\right\rfloor=  -1-\left\lfloor\dfrac{p}{2q}\right\rfloor$ then $\left\lfloor\dfrac{s+(i_0+k)p}{q}\right\rfloor=  \left\lfloor\dfrac{p}{2q}\right\rfloor-1$ for all $0<k\leq K$, contradicting the definition of $i_0$. It follows that $\left\lfloor\dfrac{s+i_0p}{q}\right\rfloor=-\left\lfloor\dfrac{p}{2q}\right\rfloor$ and $\left\lfloor\dfrac{s+(i_0+k)p}{q}\right\rfloor=\left\lfloor\dfrac{p}{2q}\right\rfloor$ for all $0<k\leq K$.

Recalling then that $T_{M_n}^+$ appears in $\underset{0\leq s< p}{\bigoplus}\left(\underset{i\neq i_s\in \Z}{\bigoplus}T^+_{M_{\left\lfloor\frac{s+ip}{q}\right\rfloor}}\right)$ for all $n\geq \left\lfloor\dfrac{p}{2q}\right\rfloor$ we see that removing $\underset{0\leq s<p}{\bigoplus}T^+_{M_{\left\lfloor\frac{s+(i_0+k)p}{q}\right\rfloor}}$ from  $\underset{0\leq s< p}{\bigoplus}\left(\underset{i\in \Z}{\bigoplus}T^+_{M_{\left\lfloor\frac{s+ip}{q}\right\rfloor}}\right)$ removes some collection of tallest height towers, each of height $M_{\lfloor p/q\rfloor}$. Thus even after removing these towers we obtain an $\F[U]$-module which satisfies the strong geography restriction, as desired.
\end{proof}

We can now prove Theorem~\ref{knots}.

\begin{proof}[Proof of Theorem \ref{knots}]
Suppose $Y$ is $\frac{p}{q}\in\Q$ surgery on a knot $K$ in $S^3$. By Lemma~\ref{lem:decomposingsmall} $\HF^+(S^3_{p/q}(K))$ consists of two summands. The two summands satisfy the strong geography restriction by Lemma~\ref{lem:towersummand} and Corollary~\ref{cor:torsionsummand}. The result follows.
\end{proof}

\section{Three-manifolds arising as surgery on links}\label{sec:links}
A link $L=L_1\cup \dots\cup L_\ell\subset S^3$ with $\ell$ components can be represented by a multi-pointed Heegaard diagram $\mathcal{H}=(\Sigma, \alpha_1\dots \alpha_k , \beta_1, \dots , \beta_k, z_1, \dots z_\ell, w_1,\dots w_\ell )$. Similarly to the case of knots, this gives rise to a multi-filtered chain complex $(\CFL^-(\mathcal{H}), \mathcal{F}_1, \dots. \mathcal{F}_\ell)$ where: $\CFL^-(\mathcal{H})$ denotes the free $\F[U_1, \dots U_\ell]$-module generated by the intersection points of the lagrangian tori $\mathbb{T}_\alpha$ and $\mathbb{T}_\beta$ associated to the $\alpha$- and $\beta$-curves, equipped with differential
\[\partial \x= \sum_{\y	\in \mathbb{T}_\alpha \cap \mathbb{T}_\beta }\ \  \sum_{\phi\in \pi_2(\x,\y), \ \mu(\phi)=1}  \#\left( \frac{\mathcal{M}(\phi)}{\R}\right)  U_1^{n_{z_1(\phi)}} \dots \ \ U_\ell^{n_{z_\ell(\phi)}} \cdot \y \ , \]
and $\mathcal{F}_{i}=\mathcal{F}_{L_i}$ denotes the filtration associated to the $i^\text{th}$ component of the link $L$ extended so that multiplication by the variable $U_i$ drops the filtration level $\mathcal{F}_{i}$ by one~\cite{ozsvath2008holomorphic}. 

\subsection{Alexander filtration and $\spin^c$ labelling}One can view that the multi-filtration $\vec{\mathcal{F}}= (\mathcal{F}_{1} \dots , \mathcal{F}_{\ell})$ as taking value in the lattice:
\[ \mathbb{H}(L)= \bigoplus_{i=1}^\ell \mathbb{H}(L)_i \ , \ \ \ \ \ \  \mathbb{H}(L)_i =\Z+\frac{1}{2} \sum_{j\not= i} \text{lk}(L_i, L_j)\ . \]
whose points parametrize relative $\spin^c$ structures on $S^3\setminus L$. Lattice points can also be interpreted as $\spin^c$ structures on any integer surgery $S^3_{n_1, \dots , n_\ell}(L)$,  with the rule that two lattice points $\mathbf{s}$ and $\mathbf{s}' \in \Hy(L)$ define the same  $\spin^c$ structure if their difference $\mathbf{s} - \mathbf{s}'$ lies in the span of the columns of the linking matrix $\Lambda$:
\[\Lambda_{ij}= 
\begin{cases} 
& lk(L_i,L_j) \ \ \ \text{ if } i\not=j  \\
& \ \ \ \ n_i \ \ \ \ \ \ \ \ \text{ otherwise }
\end{cases} 	 \ . \]
To avoid ambiguity one can restrict to a fundamental domain $P(\Lambda)\subset \Hy(L)$ of the action so that every point $\mathbf{s}\in P(\Lambda)$ corresponds to a unique $\spin^c$ structure of   $S^3_{n_1, \dots , n_\ell}(L)$.

Sometimes instead of working with the fundamental domain $P(\Lambda)$ we shall consider a  small enlargement of it that we shall denote by $\overline{P(\Lambda)}$. This consists of all the points in $P(\Lambda)$ union all points $\bs \in \mathbb{H}(L)$ whose coordinates differ from a point in $P(\Lambda)$ by at most one. This can be interpreted as ``the lattice closure" of $P(\Lambda)$.

\begin{exa} Consider  $n$-surgery on a knot $K \subset S^3$.  In the previous section we chose $P(\Lambda)=(-n/2, n/2] \cap \Z$. In this case $\overline{P(\Lambda)}= (-n/2-1, n/2+1] \cap \Z $.  
\end{exa}

\begin{exa} For an algebraically split link $L=L_1\cup \dots L_\ell$ in $S^3$, that is a link such that $\text{lk}(L_i,L_j)=0$ for all $i\not=j$, the linking matrix is diagoinal 
\[\Lambda= 
\left( \begin{matrix}
n_1 & & \\
&\ddots & \\
& & n_\ell 
\end{matrix} \right)\]
and one can pick $P(\Lambda)$  so that it has the shape of a hyperbox with vertices at the points with coordinates $(\pm n_1/2, \dots , \pm n_\ell/2)$.
\end{exa}

\begin{exa} 
The linking matrix of a framed $2$-component link $L\subset S^3$ is a $2\times2$ matrix 
\[\Lambda= 
\left( \begin{matrix}
n_1 & \lambda \\
\lambda & n_2 
\end{matrix} \right)\]
where $\lambda= \text{lk}(L)$. In this case $P(\Lambda)$ has the shape of a parallelogram with vertices at the points with coordinates
\[
\pm \frac{1}{2} 
\left( \begin{matrix}
n_1  \\
\lambda 
\end{matrix} \right) 
\pm \frac{1}{2}
\left( \begin{matrix}
\lambda \\
n_2 
\end{matrix} \right)  \ .
\] 
Its two sides have length $(n_1^2+ \lambda^2)^{1/2}$ and $(n_2^2+ \lambda^2)^{1/2}$ respectively, and acute angle $\theta$ such that:
\[\cos \theta = \left(\frac{1}{n_1} + \frac{1}{n_2} \right) \cdot \frac{\lambda}{ \left(\left(\frac{\lambda}{n_1}\right)^2+ 1\right)^{1/2} \cdot  \ \ \left(\left(\frac{\lambda}{n_2}\right)^2+ 1\right)^{1/2} } \ . \]
Note that as the framings $n_1$ and $n_2$ grow larger $P(\Lambda)$ approaches a rectangle ($\theta \to \pi/2$ as $n_1, n_1 \to \infty$) and its sides grow in length.
\end{exa}

For a general $\ell$-component link the domain $P(\Lambda)$ has the shape of a parallelopiped with sides parallel to the column vectors of the linking matrix $\Lambda$. Note that as the framings grow larger $P(\Lambda)$ approaches the shape of an $\ell$-dimensional hyperbox. 
We shall take $P(\Lambda)$ to be the unique domain with the points
\[\pm \frac{1}{2} \sum_{i=1}^\ell \vec{\Lambda}_i\]
as corners.

\subsection{The Ozsv\' ath-Manolescu surgery formula} Given $\mathbf{s} = (s_1, \dots , s_\ell) \in \mathbb{H}(L)$, as in the case of knots, we define $A_\mathbf{s}^-=\mathfrak{A}^-(\mathcal{H}, \mathbf{s})$ to be the subcomplex of $\CFL^-(\mathcal{H})$ spanned by all generators $x=U_1^{m_1}\dots U_\ell^{m_\ell}\cdot \mathbf{x}$ with $\x \in \mathbb{T}_\alpha \cap \mathbb{T}_\beta$, and $m_1, \dots , m_\ell \geq 0$ such that $ \mathcal{F}_i(\x)-m_i= \mathcal{F}_i(x) \leq s_i$ for $i = 1,\dots , \ell$.

\begin{thm}[Large surgery formula, Ozsv\' ath \& Manolescu \cite{manolescu2010heegaard}]\label{OMformula} Let $\mathbf{n}=(n_1, \dots, n_\ell)\in \Z^\ell$, and $\mathbf{s} \in P(\Lambda)$ a $\spin^c$ structure. If $\mathbf{n}$  is sufficiently large, that is $n_i>>0$ for $i\in\{1, \dots, \ell\}$, then $\CF^-(S^3_\mathbf{r}(L), \mathbf{s})$ and $A^-_\mathbf{s}$ are quasi-isomorphic. 
\end{thm}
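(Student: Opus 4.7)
The plan is to adapt the large-surgery argument for knots due to Ozsv\'ath--Szab\'o, working one $\spin^c$ structure at a time and using an ``energy/filtration'' argument on the multi-$U$ powers to kill the contributions that distinguish the genuine surgery complex from $A_{\bs}^-$.

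First I would fix a multi-pointed Heegaard diagram $\mathcal{H}$ for the link exterior adapted to a bouquet $L\cup\gamma_1\cup\dots\cup\gamma_\ell$, and then extend it to a Heegaard diagram $\mathcal{H}_{\mathbf{n}}$ for $S^3_{\mathbf{n}}(L)$ by adding $\ell$ new $\beta$-curves $\mu_1,\dots,\mu_\ell$ obtained from $n_i$-framed meridians of $L_i$ (winding them the prescribed number of times along the boundary of a neighborhood of each component, in the standard Ozsv\'ath--Szab\'o fashion). The generators of $\CF^-(\mathcal{H}_{\mathbf{n}})$ are then pairs $(\x,\vec{y})$ where $\x\in\mathbb{T}_\alpha\cap\mathbb{T}_\beta$ and $\vec{y}$ is a tuple of intersection points of the new $\mu_i$ with their dual $\alpha$-curves. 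The standard computation shows that the assignment $(\x,\vec{y})\mapsto\mathbf{s}(\x,\vec{y})$ identifies $\spin^c$-equivalence classes of these generators with lattice points in $\mathbb{H}(L)/\Lambda\Z^\ell$.

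Next I would show that, for each fixed $\mathbf{s}\in P(\Lambda)$, the map $(\x,\vec{y})\mapsto U_1^{a_1}\dots U_\ell^{a_\ell}\cdot\x$, where $a_i=\mathcal{F}_i(\x)-s_i$ is the deficit in the $i$th Alexander coordinate, gives a bijection between the generators of $\CF^-(\mathcal{H}_\mathbf{n},\mathbf{s})$ and the generators of $A^-_{\mathbf{s}}$. This is where the hypothesis that $\mathbf{n}$ is large relative to the link Floer polytope enters: one needs the winding regions to be wide enough that within a single $\spin^c$-class no two different pairs $(\x,\vec{y})$ and $(\x',\vec{y}')$ lie in the same $\Lambda$-coset of $\mathbb{H}(L)$. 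Quantitatively, largeness means that the columns of $\Lambda$ translate the Alexander multi-grading outside the smallest hyperbox containing the support of $\CFL^-(L)$, so each coset has at most one representative inside that box.

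The main obstacle, and the final step, is matching differentials. The holomorphic disks in $\mathcal{H}_\mathbf{n}$ that are not ``small'' near the winding regions necessarily cross a basepoint $z_i$ or $w_i$ many times, producing a high power of $U_i$ in the output. A standard area argument together with the placement of the new basepoints shows that, modulo $U^N$ for any desired $N$, the only disks contributing to $\partial$ are the disks counted by the differential of $\CFL^-(L)$, together with correction terms which correspond exactly to the truncation defining $A^-_{\mathbf{s}}$ (multiplication by $U_i$ dropping $\mathcal{F}_i$, and the cutoff $\mathcal{F}_i\leq s_i$). Since $\CF^-$ is $U$-adically complete, this suffices to identify the two complexes up to quasi-isomorphism. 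The precise verification---that for $\mathbf{n}$ large the only disks of controlled $U$-power are either disks in $\mathcal{H}$ or triangles producing the truncation relations of $A^-_{\mathbf{s}}$---is the delicate part, and is essentially what Manolescu--Ozsv\'ath carry out in~\cite{manolescu2010heegaard} by combining the knot-surgery analysis of~\cite{ozsvath2004knotintegersurgery} with a careful inductive treatment of the multiple winding regions.
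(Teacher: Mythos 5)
The paper does not prove Theorem~\ref{OMformula}; it is quoted from Manolescu--Ozsv\'ath~\cite{manolescu2010heegaard}. What the paper does offer, in the paragraph following Definition~\ref{def:large}, is a short account of how the large surgery formula arises by ``horizontally truncating'' the general integer surgery formula of~\cite{manolescu2010heegaard}: the maps of~\cite[Lemma 10.1]{manolescu2010heegaard} become isomorphisms for $\mathbf{s}\notin Q$, so the hypercube collapses onto the single term $A^-_{\mathbf{s}}$. Your sketch takes a genuinely different route: you describe a direct winding-region argument that generalizes Ozsv\'ath--Szab\'o's proof of the large surgery theorem for knots, adding $\ell$ winding regions to a diagram of the link exterior, matching generators $\spin^c$-class by $\spin^c$-class with those of $A^-_{\mathbf{s}}$, and arguing that for large framings the holomorphic disks reproduce the internal differential and the truncation relations. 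That route is legitimate in spirit and closer to the knot-case precedent, but it is not the one used by the paper nor by Manolescu--Ozsv\'ath, who build the full surgery complex first and then truncate; the truncation route handles all framings uniformly, whereas the direct route is shorter if one only wants the large case but requires redoing the multi-pointed triangle analysis that you (reasonably) defer to the citation.

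Two imprecisions worth flagging. First, the phrase ``no two different pairs $(\x,\vec{y})$ and $(\x',\vec{y}')$ lie in the same $\Lambda$-coset'' is internally inconsistent --- being in the same $\spin^c$-class \emph{is} being in the same $\Lambda$-coset of $\mathbb{H}(L)$. What largeness actually buys is that, within a fixed $\spin^c$ structure, the generator map onto $A^-_{\mathbf{s}}$ is a bijection because each $\x$ has a unique ``closest-point'' choice of $\vec{y}$ in the winding region; this is the link analogue of the condition $n\geq 2g-1$ in the knot case, and it is equivalent to the condition $Q\subset\overline{P(\Lambda)}$ of Definition~\ref{def:large}. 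Second, the appeal to $U$-adic completeness to pass from ``agreement modulo $U^N$'' to a quasi-isomorphism is not quite how the argument runs: for framings large in the stated sense, the differentials agree exactly, not only approximately; the completion plays a role in Manolescu--Ozsv\'ath's framework for other reasons (they work over power series), but is not what closes the gap here.
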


Note that multiplication by any of the variables $U_1, \dots, U_\ell$ gives rise to a chain map $A^-_\mathbf{s}\to A^-_\mathbf{s}$. It turns out that these maps are chain homotopic to each other, and that in Theorem \ref{OMformula} the map induced on homology can be identified with the $U$-action of the Heegaard Floer group $\HF^-(S^3_\mathbf{n}(L), \mathbf{s})$.

\subsection{A definition of large} Given a link $L$ we let $\mathcal{L}$ denote its link Floer polytope. This  is the convex hull of all those $\bs \in  \mathbb{H}(L)$ for which $\widehat{\HFL}(L, \bs)\not=0$. In this context we will denote by $Q=[-q_1,q_1]\times[-q_2,q_2]\dots [-q_n,q_\ell]$ the smallest hyperbox containing $\mathcal{L}$. Note that the link Floer polytope can be expressed  as the Minkowski sum of the dual of the Thurston polytope and of a hyperbox of side length one, unless $L$ contains a split unlinked component~\cite[Theorem 1.1]{ozsvath2008linkFloerThurstonnorm}. In particular if $L$ does not have a split unknotted component then $Q$ has non-vanishing volume. 

\begin{lem}\label{lem:large}
If $\mathbf{n}>>0$ then $Q\subset P(\Lambda)$.
\end{lem}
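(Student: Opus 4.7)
The plan is to view $P(\Lambda)$ as the image of the unit hyperbox $[-\tfrac12,\tfrac12]^{\ell}$ under the linear map $\Lambda$, and then show that $\Lambda^{-1}Q$ shrinks into this hyperbox as the framings grow.

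Write $\Lambda=D+L'$, where $D=\mathrm{diag}(n_1,\dots,n_\ell)$ is the diagonal matrix of framings and $L'$ is the matrix whose entries are $\mathrm{lk}(L_i,L_j)$ off the diagonal and $0$ on the diagonal. Crucially, $L'$ and the polytope $Q$ depend only on the link $L$, not on the framings $\mathbf{n}$. By the definition of $P(\Lambda)$ given in the excerpt,
\[
P(\Lambda)=\Bigl\{\sum_{i=1}^\ell t_i\vec\Lambda_i \;:\; t_i\in[-\tfrac12,\tfrac12]\Bigr\}=\Lambda\bigl([-\tfrac12,\tfrac12]^\ell\bigr),
\]
so a point $\vec q\in\mathbb{R}^\ell$ lies in $P(\Lambda)$ if and only if $\Lambda^{-1}\vec q\in[-\tfrac12,\tfrac12]^\ell$, once $\Lambda$ is invertible.

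The first step is to observe that for $\mathbf{n}$ sufficiently large the matrix $D^{-1}L'$ has operator norm (say in the $\ell^\infty$ sense) strictly less than $1$, since each entry of $D^{-1}L'$ is at most $\max_{i\neq j}|\mathrm{lk}(L_i,L_j)|/\min_i n_i$. Hence $\Lambda=D(I+D^{-1}L')$ is invertible with
\[
\Lambda^{-1}=(I+D^{-1}L')^{-1}D^{-1},
\]
and $(I+D^{-1}L')^{-1}\to I$ entrywise as $\mathbf n\to\infty$. The second step is to apply this to $\vec q\in Q$: since $\|\vec q\|_\infty\le\max_i q_i$ is bounded independently of $\mathbf n$, we have
\[
\|\Lambda^{-1}\vec q\|_\infty \;\le\; \bigl\|(I+D^{-1}L')^{-1}\bigr\|_\infty\cdot\|D^{-1}\vec q\|_\infty \;\le\; \bigl\|(I+D^{-1}L')^{-1}\bigr\|_\infty\cdot\frac{\max_i q_i}{\min_i n_i},
\]
and the right-hand side tends to $0$ as $\mathbf n\to\infty$, uniformly in $\vec q\in Q$. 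Choosing $\mathbf n$ large enough that this bound is at most $\tfrac12$, we conclude $\Lambda^{-1}Q\subset[-\tfrac12,\tfrac12]^\ell$, i.e.\ $Q\subset P(\Lambda)$.

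There is no serious obstacle here: the only content is that the off-diagonal part of $\Lambda$ and the size of $Q$ are fixed invariants of $L$, while the diagonal part of $\Lambda$ can be made arbitrarily large. The mild bookkeeping concerns justifying that ``$\mathbf n\gg 0$'' can be taken to mean each $n_i$ exceeds some threshold depending only on $L$ (through the linking numbers of $L$ and through the coordinates $q_1,\dots,q_\ell$ of the bounding hyperbox of $\mathcal L$).
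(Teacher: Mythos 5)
Your proof is correct, and it is essentially a rigorous version of the short argument the paper gives. The paper's proof is qualitative: it covers the fixed hyperbox $Q$ by finitely many translates of a fundamental domain $P(\Lambda)$ for some initial framing, writes $\Lambda_t$ for the matrix with its diagonal increased by $t$, and observes that the parallelepiped $P(\Lambda_t)$ ``increases in size'' and ``approaches the shape of a hyperbox'' as $t\to\infty$, hence eventually absorbs the finitely many translates covering $Q$. Your argument replaces this with an explicit estimate: writing $\Lambda=D(I+D^{-1}L')$, showing $\|\Lambda^{-1}\vec q\|_\infty\to0$ uniformly over $Q$, and concluding $\Lambda^{-1}Q\subset[-\tfrac12,\tfrac12]^\ell$. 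What your version buys is a quantified threshold for $\mathbf n$ in terms of the linking numbers and the coordinates $q_i$, and it handles the framings $n_i$ growing independently rather than by a single parameter $t$ as in the paper; the paper's version is shorter and appeals to the picture of $P(\Lambda)$ tiling the lattice $\Hy(L)$. (One small remark: each \emph{entry} of $D^{-1}L'$ being small does not by itself give a small $\ell^\infty$ operator norm --- one needs the row sums, which contributes an extra factor of $\ell-1$; this is harmless since $\ell$ is fixed, but worth stating.)
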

\begin{proof}
For a given $\mathbf{n}$ the hyperbox $Q$ will be covered by a few different translates of the fundamental domain $P(\Lambda)$. Let $\Lambda_t$ be the matrix obtained from the linking matrix $\Lambda$ by increasing its diagonal enteries by $t>0$. As $t\to +\infty$ the $\ell$-dimensional parallelogram  $P(\Lambda_t)$ increase in size and eventually contains all the translates of $P(\Lambda)$ covering the hyperbox $Q$. 
\end{proof}

\begin{defi}\label{def:large}
$\mathbf{n}$-framed surgery on a link $L$ is large if $Q\subset \overline{P(\Lambda)}$.
\end{defi}

Note that if we take the framing $\mathbf{n}$ to be large in the sense of Definition \ref{def:large} then we can apply the large surgery formula stated in Theorem \ref{OMformula}.  This is explained implicitly by Manolesu-Ozsv\'ath~\cite[P 112.]{manolescu2010heegaard}, see also the proof given by Borodzik-Gorsky for~\cite[Lemma 3.10]{borodzik2018immersed}. To expand on this point slightly, observe that Manolescu-Ozsv\'ath's general integer surgery formula can be ``horizontally truncated" -- see~\cite[Section 10.1]{manolescu2010heegaard} -- to the large surgery formula in the case that $\Lambda$ is large in the sense of Definition~\ref{def:large}. The key observation that allows one to prove this is that appropriate versions of the map from~\cite[Lemma 10.1]{manolescu2010heegaard} are isomorphisms for $\mathbf{s}\not\in Q$.

\subsection{Lemma \ref{lem:largesimilar} revisited} We now move towards proving Theorem \ref{largelinks}. We start by introducing some notation, definition, and lemmas.

\begin{defi}Let $\bs=(s_1, \dots, s_\ell)$ and $\bs'=(s_1', \dots, s_\ell')\in \mathbb{H}(L)$ be two lattice points. We say that $\bs'$ is adjacent to $\bs$ if  $0\leq s_i'-s_i\leq 1$ for $i=1, \dots, \ell$. 
\end{defi}

The key lemma is as follows.

\begin{lem} \label{adj} Let $\bs'$ and $\bs$ be two lattice points. If $\bs'$ is adjacent to $\bs$  then $H_*(A^-_{\bs'})$  and $H_*(A^-_{\bs})$ have similar skylines.
\end{lem}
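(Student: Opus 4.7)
My plan is to proceed as in the proof of Lemma~\ref{lem:largesimilar}, using the inclusion $A_\bs^- \subseteq A_{\bs'}^-$ of subcomplexes of $\CFL^-(\mathcal{H})$---which holds because increasing any $s_i$ only weakens the defining inequality $\mathcal{F}_i(x)\leq s_i$---to produce a short exact sequence
\begin{equation*}
0 \longrightarrow A_\bs^- \longrightarrow A_{\bs'}^- \longrightarrow Q \longrightarrow 0,
\end{equation*}
where $Q := A_{\bs'}^-/A_\bs^-$. The associated long exact sequence furnishes an exact triangle
\begin{equation*}
H_*(Q) \longrightarrow H_*(A_\bs^-) \longrightarrow H_*(A_{\bs'}^-) \longrightarrow H_*(Q),
\end{equation*}
so by Lemma~\ref{lem:heightchanges} it suffices to show $H_*(Q) \cong \F^n$ for some $n \geq 0$.

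The argument is cleanest when $\bs' = \bs + e_{i_0}$ differs from $\bs$ in exactly one coordinate. In that case $Q$ is generated over $\F$ by monomials $U_1^{m_1}\cdots U_\ell^{m_\ell}\cdot\mathbf{x}$ with $\mathcal{F}_{i_0}(x) = s_{i_0}+1$ and $\mathcal{F}_j(x) \leq s_j$ for $j \neq i_0$; multiplication by $U_{i_0}$ drops $\mathcal{F}_{i_0}$ to $s_{i_0}$, sending the result into $A_\bs^-$, so $U_{i_0}$ annihilates $Q$ at the chain level. Manolescu--Ozsv\'ath's large surgery formula (Theorem~\ref{OMformula}) identifies $H_*(A_\bs^-)$ and $H_*(A_{\bs'}^-)$ with Heegaard Floer groups of sufficiently large surgeries, hence as finitely generated $\F[U]$-modules; the long exact sequence then forces $H_*(Q)$ to be finitely generated over $\F[U]$ as well. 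Because all $U_i$-actions on $A_{\bs'}^-$ are mutually chain-homotopic and those homotopies descend to $Q$, the entire $U$-action on $H_*(Q)$ vanishes, so $H_*(Q)$ is a finite-dimensional $\F$-vector space. Lemma~\ref{lem:heightchanges} then delivers similar skylines.

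For a general adjacent pair, writing $I := \{i : s_i' > s_i\}$, one can interpolate between $\bs$ and $\bs'$ through a chain of lattice points $\bs = \bs^{(0)},\ldots,\bs^{(|I|)} = \bs'$ in which successive members differ in a single coordinate, and apply the single-coordinate case at each transition. The principal obstacle here is that the similar-skyline relation is not transitive, so naive chaining would only control tower-height drift by $|I|$ rather than by one. The fix I have in mind is a direct analysis of $Q$ for arbitrary adjacent $\bs,\bs'$, via the filtration $F_kQ := \{x \in Q : |\{i \in I : \mathcal{F}_i(x) = s_i+1\}| \leq k\}$: each graded piece $F_k/F_{k-1}$ decomposes as a direct sum of subcomplexes on which some $U_i$ acts as zero at the chain level, so each has $\F$-vector space homology. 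The delicate step---which I expect to be the main technical point---is to bootstrap this, using the chain-homotopies between the various $U_i$'s, to the conclusion that $H_*(Q)$ is itself an $\F$-vector space, after which Lemma~\ref{lem:heightchanges} applies exactly as in the single-coordinate case.
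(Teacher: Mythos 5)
Your proposal matches the paper's own proof in its overall structure: both pass from the inclusion $A^-_\bs \subseteq A^-_{\bs'}$ to the exact triangle and reduce to showing that $H_*(A^-_{\bs'}/A^-_\bs)$ is a finite-dimensional $\F$-vector space, so that Lemma~\ref{lem:heightchanges} applies. The paper simply asserts this without argument; you are right to treat it as the crux of the matter, and your single-coordinate argument is the correct one (modulo the subsidiary point, which you assert but do not check and the paper does not mention, that the chain homotopies relating the various $U_i$-actions preserve the subcomplexes $A^-_\bs$ and hence descend to the quotient).

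The real difficulty is the multi-coordinate case, and your proposal, by your own admission, stops short of resolving it. If $I=\{i : s_i'>s_i\}$ has $|I|\geq 2$, a ``corner'' generator $x\in Q:=A^-_{\bs'}/A^-_\bs$ with $\mathcal{F}_i(x)=s_i+1$ for every $i\in I$ is annihilated at the chain level only by the product $\prod_{i\in I}U_i$, not by any single $U_i$. Together with the homotopies among the $U_i$'s, that gives only $U^{|I|}=0$ on $H_*(Q)$, which is too weak for Lemma~\ref{lem:heightchanges}. Your filtration $F_kQ$ reproduces the same bound: each associated-graded piece has $\F$-vector-space homology, which controls $U^{|I|}$-torsion via the spectral sequence but does not by itself promote this to $U$-torsion. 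The ``bootstrap'' you defer is therefore precisely the substance of the lemma in the regime the paper actually uses (for instance Proposition~\ref{cor:Fsummands} invokes Lemma~\ref{adj} with $\bs=\mathbf{0}$ and $\bs'=(1,\dots,1)$). Either there is an argument neither you nor the paper has recorded --- one plausible route is a grading or lattice-geometry argument that the inclusion $A^-_\bs\hookrightarrow A^-_{\bs'}$ drops each tower height by at most one, equivalently that the relevant $d$-type invariants change by at most $2$ across a diagonal step --- or the statement needs more care for $|I|\geq 2$; as written, your proposal does not close this gap.
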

\begin{proof}
If $\bs'$ is adjacent to $\bs$ then there is an inclusion of sub-complexes  $A^-_{\bs}\subseteq A^-_{\bs'}$. Furthermore $H_*(A^-_{\bs'} / A^-_{\bs})$ is annihilated by multiplication by $U$. Thus we can look at the short exact sequence: 
 \[\begin{tikzcd}
 0 \ar[r] & A^-_{\bs}\ar[r] & A^-_{\bs'} \ar[r] & A^-_{\bs'} / A^-_{\bs} \ar[r] & 0
\end{tikzcd}\]
and obtain an exact triangle of the form:
\begin{equation} \begin{tikzcd}  \F^k \simeq H_*(A^-_{\bs'} / A^-_{\bs}) \ar[dr] && H_*(A^-_{\bs}) \ar[ll]  \\&H_*(A^-_{\bs'}) \ar[ur] \end{tikzcd}\label{exacttrinaglelarge}           
\end{equation}         
For some $k\geq 0$. It follows from Lemma \ref{lem:changingsizetower} that  $H_*(A^-_\bs)$ and $H_*(A^-_{\bs'})$ have similar skylines. 
\end{proof}

We conclude this section with a discussion of integer homology spheres that arise as large surgeries on links.

\begin{lem}\label{prop:polytope}
Suppose large surgery on an $\ell$-component link $L$ yields an integral homology sphere. Then the link Floer polytope of $L$ is contained in $[-1,1]^\ell$.  
\end{lem}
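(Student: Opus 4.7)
The plan is to combine the integer homology sphere hypothesis with the conjugation symmetry of link Floer homology: the former forces $P(\Lambda)$ to collapse to a single orbit representative, while the latter, together with the large surgery hypothesis, pins every lattice point of $\mathcal{L}$ inside a unit hypercube.

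Since $Y = S^3_\Lambda(L)$ is an integer homology sphere, $|\det \Lambda| = |H_1(Y;\Z)| = 1$, so $P(\Lambda)$ --- a unit-volume parallelotope and fundamental domain for $\Lambda\Z^\ell$ acting on $\mathbb{H}(L)$ --- contains exactly one orbit representative $\bs_0 \in \mathbb{H}(L)$. Unpacking the definition of $\overline{P(\Lambda)}$ in this case, I expect to show
\[ \overline{P(\Lambda)} \cap \mathbb{H}(L) \subseteq \bs_0 + \{-1, 0, 1\}^\ell, \]
directly generalising the 1-dimensional pattern from the knot example preceding the lemma, where with only one lattice point in $P(\Lambda)$ the ``lattice closure" extends outward by exactly one in each coordinate direction.

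Now I would apply the large surgery hypothesis $Q \subseteq \overline{P(\Lambda)}$ together with $\mathcal{L} \subseteq Q$: for every lattice point $\bs$ of $\mathcal{L}$ (equivalently, every $\bs$ in the support of $\widehat{\HFL}(L)$), this gives $\bs - \bs_0 \in \{-1, 0, 1\}^\ell$. The conjugation symmetry $\widehat{\HFL}(L, \bs) \cong \widehat{\HFL}(L, -\bs)$ places $-\bs$ in the support as well, so $-\bs - \bs_0 \in \{-1, 0, 1\}^\ell$ too. Subtracting the two memberships gives
\[ 2\bs \;=\; (\bs - \bs_0) - (-\bs - \bs_0) \;\in\; \{-2, -1, 0, 1, 2\}^\ell, \]
whence $\bs \in [-1, 1]^\ell$. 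Since $\mathcal{L}$ is the convex hull of its support, $\mathcal{L} \subseteq [-1, 1]^\ell$.

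The main obstacle is the first step, namely confirming the containment $\overline{P(\Lambda)} \cap \mathbb{H}(L) \subseteq \bs_0 + \{-1, 0, 1\}^\ell$; this is precisely where the unique-$\spin^c$ consequence of the integer homology sphere hypothesis is essential. The possible presence of $\Lambda\Z^\ell$-equivalent boundary lattice points in $P(\Lambda)$ has to be handled carefully, but since any such point lies in the same orbit as $\bs_0$, fixing a single canonical representative of that orbit as fundamental domain point suffices. After that bookkeeping, the remainder is the short symmetric-combinatorics argument above.
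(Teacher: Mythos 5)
Your proof is correct, and it actually patches a small gap in the paper's own argument. The paper's proof simply asserts that ``the only lattice point that $P(\Lambda)$ contains is $\mathbf{0}$'' and concludes $\overline{P(\Lambda)} = [-1,1]^\ell$; but this requires $\mathbf{0} \in \mathbb{H}(L)$, which fails whenever some $\sum_{j\neq i}\lk(L_i,L_j)$ is odd. (For instance, $(2,1)$-surgery on a positively linked Hopf link yields an integral homology sphere, yet $\mathbb{H}(L) = (\tfrac{1}{2},\tfrac{1}{2})+\Z^2$ misses the origin, and $P(\Lambda)$ contains a half-integer lattice point rather than $\mathbf{0}$.) You avoid this pitfall: rather than identifying the unique orbit representative $\bs_0$ with $\mathbf{0}$, you keep $\bs_0$ general and then invoke the conjugation symmetry $\widehat{\HFL}(L,\bs)\cong\widehat{\HFL}(L,-\bs)$ to recenter. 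The averaging step $2\bs = (\bs-\bs_0)-(-\bs-\bs_0)\in\{-2,\dots,2\}^\ell$ is exactly what is needed, and it never requires the fundamental domain to be centered on a lattice point. What the paper buys is brevity (three sentences, assuming $\bs_0 = \mathbf{0}$); what your route buys is that the conclusion genuinely holds regardless of the parity of the linking numbers. The only thing you flag as uncertain --- the containment $\overline{P(\Lambda)}\cap\mathbb{H}(L)\subseteq\bs_0+\{-1,0,1\}^\ell$ --- is in fact immediate once one reads the paper's definition of the lattice closure as ``lattice points differing by at most one in each coordinate from a lattice point of $P(\Lambda)$'' (which is the interpretation consistent with the knot example), since $P(\Lambda)$ meets $\mathbb{H}(L)$ in exactly the one point $\bs_0$.
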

\begin{proof}
    Suppose $L$ and $\ell$ are as in the statement of the proposition and let $S^3_\mathbf{n}(L)$ be a large integer homology sphere surgery on $L$. Observe that the only lattice point that $P(\Lambda)$ contains is $\mathbf{0}$. It follows that $\overline{P(\Lambda)}=[-1,1]^\ell$. By the definition of large we have that $\mathcal{L}\subseteq Q\subseteq [-1,1]^\ell$, as desired.
\end{proof}

\begin{proof}[Proof of Proposition~\ref{cor:Fsummands}]
    Suppose $Y$ is as in the statement of the corollary. Write $Y=S^3_\mathbf{n}(L)$ where $\mathbf{n}$ is large and $L$ has $\ell$ components. By Lemma~\ref{prop:polytope} we have that $L$ has link Floer polytope supported in $[-1,1]^\ell$. It follows that $H_*(A^-_{(1,1,\dots,1)})\simeq T^-$. Since $Y$ is an integral homology sphere $P(\Lambda)$ contains only the origin $\mathbf{0}\in\mathbb{H}(L)$. Observe that $\mathbf{0}$ is adjacent to $(1,1,\dots,1)$ so the result follows from Lemma~\ref{adj}. 
\end{proof}

\subsection{Lattice paths}We shall consider paths of adjacent lattice points.

\begin{defi}
A sequence of lattice points $(\bs_0, \bs_1, \dots , \bs_k)$  is a lattice path if each lattice point $\bs_{j+1}$ in the sequence is adjacent to his predecessor $\bs_{j}$.
\end{defi}

We now prove a preliminary version of Theorem \ref{largelinks}.
\begin{thm}\label{linlinks} Suppose that $L\subset S^3$ is a link with $\ell$ components. Then there exists some $\mathbf{n}_0 \in \Z^\ell_{\geq 0}$ such that for all  $\mathbf{n}\geq \mathbf{n}_0$ the surgery three-manifold $S^3_\mathbf{n}(L)$ satisfies the Lin geography restriction. 
\end{thm}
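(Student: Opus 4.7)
The strategy is to adapt the intermediate-value argument from the proof of Corollary~\ref{cor:largestrong} to the lattice of $\spin^c$ structures on a large surgery on a link. I would find a monotone lattice path inside $P(\Lambda)$ whose endpoints carry, respectively, the tallest torsion summand of the surgery formula and no torsion at all; consecutive points along the path have similar-skyline $A^-$-homologies by Lemma~\ref{adj}, and the intermediate-value principle then forces an $\F$-summand to appear somewhere along the path.

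More precisely, for $\mathbf{n}_0$ large in the sense of Theorem~\ref{OMformula} we have $\HF^-(S^3_\mathbf{n}(L)) \simeq \bigoplus_{\bs \in P(\Lambda)} H_*(A^-_\bs)$. If the right-hand side is already torsion-free the Lin geography restriction holds trivially, so assume some $\bs_0 \in P(\Lambda)$ has $H_*(A^-_{\bs_0})$ containing a summand $T^-_N$ with $N \geq 1$. Next, let $Q = \prod_i [-q_i, q_i]$ denote the bounding hyperbox of the link Floer polytope. Inspection of the definition shows that whenever $s_i \geq q_i$ for every $i$ the subcomplex $A^-_\bs$ equals the full complex $\CFL^-(\mathcal{H})$; call the resulting stabilized $\F[U]$-homology $H_\infty$. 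I would then verify that $H_\infty$ is a single torsion-free tower $T^-$, in the same spirit as the identity $H_*(A^+_s) \simeq T^+$ for $s \geq g(K)$ used in the knot case, by identifying $H_\infty$ with the $\HF^-$ of a suitably ``extremal'' $\spin^c$ structure of the surgery via Theorem~\ref{OMformula}. Enlarging $\mathbf{n}_0$ if necessary, $P(\Lambda)$ will contain a point $\bs_\infty \geq \bs_0$ with $s^\infty_i \geq q_i$ for all $i$ and with the whole axis-aligned bounding box $\prod_i [s^0_i, s^\infty_i]$ contained in $P(\Lambda)$; this uses only that the parallelepiped $P(\Lambda)$ stretches linearly with $\mathbf{n}$ while $\bs_0$ and $Q$ are fixed.

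Now choose a monotone lattice path $\bs_0, \bs_1, \dots, \bs_k = \bs_\infty$ inside $P(\Lambda)$ by incrementing one coordinate at a time. Consecutive points are adjacent, so by Lemma~\ref{adj} consecutive modules $H_*(A^-_{\bs_j})$ and $H_*(A^-_{\bs_{j+1}})$ have similar skylines. The sequence thus starts with a module containing the $T^-_N$ summand and ends at the torsion-free module $H_\infty \simeq T^-$, so the intermediate-value principle used in the proof of Corollary~\ref{cor:largestrong} forces some intermediate $H_*(A^-_{\bs_j})$ to contain a $T^-_1 = \F$ summand. This $\F$-summand descends to an $\F$-summand of $\HF^-(S^3_\mathbf{n}(L))$, which is the Lin geography restriction.

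The main obstacle is the torsion-freeness of the stabilized module $H_\infty$; once that is in hand, the rest of the argument reduces to combinatorial bookkeeping inside $P(\Lambda)$ together with the intermediate-value machinery already developed for knots.
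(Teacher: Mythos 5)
Your argument is essentially the same as the paper's: start at a lattice point $\bs_0\in P(\Lambda)$ whose $A^-$-homology contains a $T^-_N$-summand, walk a monotone lattice path to a point lying beyond the link Floer polytope where $H_*(A^-)$ degenerates to $\F[U]$, apply Lemma~\ref{adj} at each step, and invoke the discrete intermediate-value argument to extract an $\F$-summand; after enlarging $\mathbf{n}_0$ the entire path lies in $P(\Lambda)$, so Theorem~\ref{OMformula} identifies each $H_*(A^-_{\bs_j})$ with a $\spin^c$-summand of $\HF^-(S^3_{\mathbf{n}}(L))$. The paper uses a diagonal path (all coordinates incremented simultaneously) while you use a staircase (one coordinate at a time); since both produce adjacent consecutive points the distinction is immaterial.

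One step in your sketch, however, does not work as stated. You propose to verify $H_\infty\simeq T^-$ by identifying $H_\infty$ with $\HF^-(S^3_{\mathbf{n}}(L),\bs_\infty)$ via Theorem~\ref{OMformula}. This is circular: the surgery formula gives you the identification but says nothing about whether that $\spin^c$-summand is torsion-free, which is exactly what you are trying to establish. The paper closes this gap by invoking the structural fact (citing the original Ozsv\'ath--Szab\'o link Floer paper) that $\CFL^-(\mathcal{H})$ is generated over $\F[U_1,\dots,U_\ell]$ by elements whose Alexander multi-gradings lie in the link Floer polytope $\mathcal{L}$; consequently once $\bs$ clears $\mathcal{L}$ in every coordinate, $A^-_\bs$ recovers a complex computing $\HF^-(S^3)\simeq\F[U]$ directly. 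If you replace your circular appeal to the surgery formula with this observation -- which is genuinely the link analogue of $A^+_s\simeq \CF^+(S^3)$ for $s\geq g(K)$ -- your proof is complete and matches the paper's.
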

\begin{proof}[Proof of Theorem \ref{linlinks}]
Let $Q=[-q_1,q_1]\times[-q_2,q_2]\dots [-q_n,q_\ell]$ be a hyperbox containing $\mathcal{L}\subset \mathbb{H}(L)$, the link Floer polytope of $L$. It is shown in \cite{HolomorphicdiskslinkinvariantsandthemultivariableAlexanderpolynomial} that $\CFL^-(L)$ is generated  by module elements $x\in \CFL^-(L)$ that have Alexander multi-grading $\mathbf{s}\in \mathcal{L}$. Indeed,  $\vec{\mathcal{F}}(\x)\in \mathcal{L}$ for all intersection points $\x \in \mathbb{T}_\alpha \cap \mathbb{T}_\beta$. Consequently $H_*(A^-_\mathbf{s})\simeq \HF^-(S^3)=\F[U]$ for all lattice points $\bs=(s_1, \dots, s_\ell)$ such that 
\[\mathcal{L}\subset \{(j_1, \dots, j_\ell) \in \mathbb{H}(L) :  s_i \leq j_i \}\ .\] 

Choose a lattice point $\bs \in P(\Lambda)$ such that  $H_*(A_{\bs}^-)$ contains a direct summand $\F[U]/U^n$. If there is no such $\bs$ then $L$ is an $L$-space link and there is nothing to prove. Starting from $\bs$ form a lattice path $(\bs_0=\bs, \bs_1, \dots , \bs_k)$ where  each vector is obtained from its predecessor by increasing all components by one. Taking $k\geq1$ to be sufficiently large we can ensure that $\bs_k$ lies in the semispace $\{(j_1, \dots, j_\ell) \in \mathbb{H}(L) :  q_i \geq j_i \}$ so that $\mathcal{L}\subset A^-_\bs$, and $H_*(A^-_\mathbf{s})=\F[U]$.

Of course if $\mathbf{n}>>0$ then the lattice points  in the path $(\bs_0, \bs_1, \dots , \bs_k)$ all belong to $P(\Lambda)$ and by  Theorem \ref{OMformula} we have identifications 
\[T^-_n\subset \HF^-(S^3_{\mathbf{n}}(L), \bs_0)\simeq H_*(A^-_{\bs_1})\ ,  \ \dots \ ,\   \HF^-(S^3_{\mathbf{n}}(L), \bs_k)\simeq H_*(A^-_{\bs_k})=\F[U] \ . \]
At this point we can apply Lemma \ref{adj} to conclude that consecutive terms in this sequence have similar skylines. Thus a term in the sequence has an $\F$-summand and in turn we have that $\HF^-(S^3_{\mathbf{n}}(L))$ satisfies the Lin geography restriction. 
\end{proof}

\subsection{Proof of Theorem~\ref{largelinks}} We continue the discussion initiated in the previous section, maintaining the same notation, with a simple Lemma.

\begin{lem}\label{lem:deputy}
Suppose that $\bs \in \mathbb{H}(L)$ is some lattice point. Then there is a lattice point $\bs' \in Q$ such that $H_*(A^-_{\bs}) \simeq H_*(A^-_{\bs'})$. 
\end{lem}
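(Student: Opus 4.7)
The plan is to modify the coordinates of $\bs = (s_1, \dots, s_\ell)$ one at a time, moving each into the interval $[-q_i, q_i]$ while preserving the isomorphism type of $H_*(A^-_\bs)$ as an $\F[U]$-module. The key input is that every intersection point $\x \in \mathbb{T}_\alpha \cap \mathbb{T}_\beta$ satisfies $\vec{\mathcal{F}}(\x) \in \mathcal{L} \subset Q$, so $-q_i \leq \mathcal{F}_i(\x) \leq q_i$ for each coordinate $i$.

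First I would handle the case where some coordinate satisfies $s_i > q_i$. Since $m_i \geq 0$ and $\mathcal{F}_i(\x) \leq q_i \leq s_i - 1$, the defining inequality $\mathcal{F}_i(\x) - m_i \leq s_i$ of $A^-_\bs$ is automatically tightened to $\mathcal{F}_i(\x) - m_i \leq s_i - 1$ on every generator. Consequently $A^-_{\bs} = A^-_{\bs - e_i}$ as subcomplexes of $\CFL^-(L)$, and iterating drops $s_i$ down to $q_i$ with no effect on $A^-_\bs$.

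Next I would handle the case $s_i < -q_i$. Here I aim to show that multiplication by $U_i$ gives a chain isomorphism $U_i : A^-_{\bs + e_i} \to A^-_\bs$. Injectivity is automatic from the freeness of $\CFL^-(L)$ over $\F[U_1, \dots, U_\ell]$. For surjectivity, I would observe that $U_i \cdot A^-_{\bs + e_i}$ is precisely the subcomplex of $A^-_\bs$ spanned by generators with $m_i \geq 1$, so the cokernel is spanned by those $x = U_1^{m_1}\cdots U_\ell^{m_\ell} \cdot \x \in A^-_\bs$ with $m_i = 0$. Any such $x$ would require $\mathcal{F}_i(\x) \leq s_i < -q_i$, contradicting $\mathcal{F}_i(\x) \geq -q_i$, so the cokernel vanishes. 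Because multiplication by $U_i$ realizes the $\F[U]$-action on homology, this yields an $\F[U]$-module isomorphism $H_*(A^-_{\bs + e_i}) \simeq H_*(A^-_\bs)$ (up to an overall Maslov grading shift by $2$), and iterating raises $s_i$ up to $-q_i$.

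Performing these two reductions in each coordinate independently produces a lattice point $\bs' \in Q$ with $H_*(A^-_\bs) \simeq H_*(A^-_{\bs'})$. I do not expect a genuine obstacle here: both reductions are direct consequences of the inclusion $\mathcal{L} \subset Q$, which forces the $i$-th defining inequality of $A^-_\bs$ either to be vacuous (on the upper side, $s_i > q_i$) or to force a full $U_i$-power on every generator (on the lower side, $s_i < -q_i$).
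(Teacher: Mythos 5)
Your proof is correct, and the overall structure (coordinate-by-coordinate reduction, splitting into the two cases $s_i > q_i$ and $s_i < -q_i$) mirrors the paper's. Case one is the same argument in both. Where you differ is case two: the paper handles $s_i < -q_i$ by reversing the orientation of the component $K_i$ so as to swap the $U_i$- and $\mathcal{F}_i$-filtrations, then invoking case one on the reoriented link, then correcting by a power of $U_i$. You instead argue directly that $U_i\colon A^-_{\bs+e_i} \to A^-_\bs$ is a chain isomorphism — injective by freeness of $\CFL^-$ over $\F[U_1,\dots,U_\ell]$, and surjective because any generator of $A^-_\bs$ with $m_i=0$ would force $\mathcal{F}_i(\x) \leq s_i < -q_i$, contradicting $\mathcal{F}_i(\x)\in[-q_i,q_i]$ — and note that this descends to an $\F[U]$-module isomorphism on homology since any $U_j$ realizes the $U$-action. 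Your version is more elementary: it avoids the orientation-reversal detour and the extra bookkeeping of the auxiliary $U_i$-filtration, at the cost of being a slightly more hands-on cokernel computation. Both routes hinge on the same geometric fact — that all intersection points have Alexander multi-grading inside $\mathcal{L}\subset Q$ — so the two proofs buy essentially the same thing, but yours is arguably the cleaner write-up of the second case.
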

\begin{proof}
 Suppose that $L$ is an $\ell$ component link. Let $Q=[-q_1,q_1]\times[-q_2,q_2]\dots [-q_\ell,q_\ell]$ and $\mathbf{s}=(s_1,s_2,\dots s_\ell)$. Suppose that $i$ is an index such that $s_i \in \R \setminus [-q_i,q_i]$.  It suffices to show that $H_*(A_{\mathbf{s}}^-)\simeq H_*(A_{\mathbf{s}_i}^-) $ where $\mathbf{s}_i=(s_1,\dots s_{i-1},q_i\dots s_\ell)$ if $s_i$ is positive or $\mathbf{s}_i=(s_1,\dots s_{i-1},-q_i\dots s_\ell)$ if $s_i$ is negative.
	
\emph{Case one: $s_i>0$.} We can represent the generators of the link Floer chain complex $\CFL^-$ as a collection of lattice points: we mark a lattice point $\mathbf{j}=(j_1, \dots, j_\ell)\in \mathbb{H}(L)$ as in $\CFL^-$ if there is an $\F$-generator $x=U_1^{m_1} \dots U_\ell^{m_\ell} \cdot \x$ such that  $j_i= \mathcal{F}_i(x)= \mathcal{F}_i(\x)-m_i$. In this picture multiplication by a variable $U_i$ acts as \[(j_1, \dots, j_\ell)\mapsto (j_1, \dots, j_i-1, \dots, j_\ell) \ , \] 
and the sub-complex $A_\bs^-$ is identified with the set of generators lying inside the semi-space of lattice points $\mathbf{j}=(j_1, \dots, j_\ell)\in \mathbb{H}(L)$ with coordinates  $j_i\leq s_i$, for $i=1, \dots, \ell$. 

Note that $\CFL^-$ can be completely reconstructed by translating the lattice points corresponding to the $\F[U_1, \dots, U_\ell]$-generators (intersection points in the Lagrangian Floer set-up), and that these all lie inside the Floer polytope $\mathcal{L}$ \cite{ozsvath2008holomorphic}. It follows that there are no lattice points marked as generators in the strip
$\{\mathbf{j} \leq \bs\}\setminus \{\mathbf{j} \leq \bs_i\} \subset \mathbb{H}(L)$, and we have the equality $A_{\mathbf{s}}^-=A_{\mathbf{s}_i}^-$.
	
\emph{Case two: $s_i<0$.} Observe that $\CFL^-(L)$ comes equipped with a $U_i$ filtration for all $1\leq i\leq n$. Let $\{K_i\}_{1\leq i\leq\ell}$ be the components of $L$. Observe that the effect of switching the orientation of the component $K_i$ is to switch the $U_i$ and $\mathcal{F}_i$ filtrations; see~\cite{ozsvath2008holomorphic}. We introduce some new notation to keep track of this; if $L$ is a link let $A_{(s_1,s_2,,\dots s_n),t}^-(L)$ denote the subcomplex of $\CFL^-(L)$ generated by elements with each $\mathcal{F}_j$ filtration at most $s_j$ and $U_i$ filtration at most $t$. Now let $L_i$ denote the link $L$ with the orientation of $L$ reversed. It follows that $$H_*(A^-_{\mathbf{s}}(L))\simeq H_*(A^-_{(s_1,s_2,\dots s_{i-1}, 0,s_{i+1}\dots s_\ell),s_i}(L_i)).$$

Since $s_i< -q_i$, multiplication by $U_i^{s_i}$ induces an isomorphism:

$$ H_*(A^-_{(s_1,s_2,\dots s_{i-1}, -s_i,s_{i+1}\dots s_n),0}(L_i))\to H_*(A^-_{(s_1,s_2,\dots s_{i-1}, 0,s_{i+1}\dots s_n),s_i}(L_i))$$
As in case one we have that $H_*(A^-_{(s_1,s_2,\dots -s_i,\dots s_n ),0},L_i)\simeq H_*(A^-_{(s_1,s_2,\dots q_i,\dots s_n),0},L_i)$ for all $i$. The result follows. 
\end{proof}

\begin{remark}
Note that the argument of Lemma \ref{lem:deputy} specifies the constant $b$ in the statement of~\cite[Lemma 10.1]{manolescu2010heegaard}.
\end{remark}

\begin{proof}[Proof of Theorem \ref{largelinks}] 
Suppose that  $Y\simeq S^3_\mathbf{n}(L)$ for some $\ell$ component link $L\subset S^3$ with $\mathbf{n}$ large in the sense of Definition \ref{def:large}. Set $$N:= \min \{k\geq0 : U^k \cdot \HF^-_\text{red}(Y)=0\}.$$ If $N=0$ or $N=1$ there is nothing to prove,  thus we can assume that $N>1$.

Given a lattice path   $( \bs_1, \dots , \bs_k)$ consisting of lattice points in   $\overline{P(\Lambda)}$, if we can ensure that the sequence satisfies the following conditions:
\begin{enumerate}
\item $\bs_i \in P(\Lambda)$ for $i\in \{1, \dots, k-1\}$, so that for each of these lattice points we can apply Theorem~\ref{OMformula} to show that $H_*(A^-_\bs)\simeq \HF^-(S^3_{\mathbf{n}}(L), \bs_1)$,
\item the start-point $\mathbf{s}_1$ is such that $\HF^-(S^3_{\mathbf{n}}(L), \bs_1)$ contains a copy of $T^-_N$,
\item the end-point $\mathbf{s}_k$ is such that $ H_*(A^-_{\bs_k})$ is torsion free.
\end{enumerate} 
then using Lemma \ref{adj} we can conclude that the summand $\bigoplus_{i=1}^{k-1} \HF^-(S^3_{\mathbf{n}}(L), \bs_i)$ contains a copy of  $ T^-_N\oplus T^-_{N-1}\dots \oplus T_1^- $.

Choose a lattice point $\bs\in P(\Lambda)$ representing a $\spin^c$ structure such that $\HF^-(Y, \bs) \simeq \HF^-(S^3_{\mathbf{n}}(L), \bs)$ contains a $T^-_N$-summand. Let $Q=[-q_1,q_1]\times[-q_2,q_2]\dots [-q_\ell,q_\ell]$ be the smallest hyperbox containing the link Floer polytope, and $\mathbf{q}=(q_1, \dots, q_\ell)$. By Lemma~\ref{lem:deputy} there is a lattice  point $\bs'$ in $Q\subset \overline{P(\Lambda)}$ that  also contains a $T^-_N$-summand. If $\bs'\in  P(\Lambda)$ then we can find a path $( \bs_1=s',\bs_2 \dots , \bs_k=\mathbf{q})$, with $\bs_1 \dots , \bs_{k-1} \in Q\cap P(\Lambda)$, and we are done.

Of course it could be that $\bs'\in Q\cap \partial\overline{P(\Lambda)}$, where $\partial \overline{P(\Lambda)}=\overline{P(\Lambda)} \setminus P(\Lambda) $. In this limiting case we can find an adjacent point $\bs'' \in Q \setminus \partial\overline{P(\Lambda)}=Q\cap P(\Lambda)$ and start a lattice path from $\bs''$ instead. This shows that $\HF^-(S^3_{\mathbf{n}}(L))$ contains a copy of $ T^-_{N-1}\dots \oplus T_1^- $ as well as a copy of $T^-_N$, concluding the proof.
\end{proof}

\bibliographystyle{plain}
\bibliography{bibliography}
\end{document}